\numberwithin{equation}{section}
\newcommand\footnoteref[1]{\protected@xdef\@thefnmark{\ref{#1}}\@footnotemark}
\newcommand{\vv}{\accentset{\approx}{v}}
\newcommand{\vp}{\varphi}
\newcommand{\dr}{\partial}
\DeclareMathOperator{\divg}{div}
\DeclareMathOperator{\dist}{dist}
\DeclareMathOperator{\supp}{supp}
\DeclareMathOperator{\diam}{diam}
\DeclareMathOperator{\loc}{loc}
\newcommand{\1}{{\mathds 1}}
\newcommand{\R}{\mathbb R}
\newcommand{\cF}{\mathcal F}
\renewcommand{\H}{\mathcal H}
\newcommand{\wt}{\widetilde}
\newcommand{\D}{\mathbb D}
\renewcommand{\L}{L}
\newcommand{\A}{\mathcal A}
\newcommand{\C}{\mathcal C}
\newcommand{\OO}{\mathcal O}
\newcommand{\W}{\mathcal W}
\newcommand{\Rn}{\mathbb R^n}
\newcommand{\abs}[1]{\left\vert#1\right\vert}
\newcommand{\br}[1]{\left(#1\right)}
\newcommand{\avg}[1]{\langle#1\rangle}
\newcommand{\set}[1]{\left\{#1\right\}}
\renewcommand{\div}{\mathrm{div}}
\renewcommand{\d}{\, \mathrm{d}} 
\newcommand{\om}{\Omega}
\newcommand{\pom}{\partial\Omega}
\newcommand{\E}{\mathsf{E}} 
\newcommand{\Lloc}{\L_{\operatorname{loc}}} 
\newcommand{\HT}{H_t} 
\newcommand{\dhalf}{D_t^{1/2}} 
\newcommand{\Hdot}{\dot{H}\protect{\vphantom{H}}} 
\newcommand{\mS}{{\mathcal S}} 
\newcommand{\IC}{\mathbb{C}}
\newcommand{\pd}{\partial}
\newcommand{\cl}[1]{\overline{#1}} 
\newcommand{\ree}{{\mathbb{R}^{n}}}
\newcommand{\lpformath}{\rm{(}}
\newcommand{\rpformath}{\rm{)}}
\newcommand{\Nformath}{\rm{N}}
\newcommand{\Dformath}{\rm{D}}
\newcommand{\wPNformath}{\rm{wPN}}
\newcommand{\wPRNformath}{\rm{wPNR}}
\newcommand{\PDformath}{\rm{PD}}
\newcommand{\PRformath}{\rm{PR}}
\newcommand{\Np}{\hyperlink{Np}{$\lpformath\Nformath_{p}\rpformath$}}
\newcommand{\Nq}{\hyperlink{Np}{$\lpformath\Nformath_{q}\rpformath$}}
\newcommand{\wPNq}{\hyperlink{wPNq}{$\lpformath\wPNformath_{p'}\rpformath$}}
\newcommand{\wPRNp}{\hyperlink{wPRNp}{$\lpformath\wPRNformath_{p}\rpformath$}}
\newcommand{\Dq}{\hyperlink{Dq}{$\lpformath\Dformath_{p'}\rpformath$}}
\newcommand{\PDq}{\hyperlink{PDq}{$\lpformath\PDformath_{p'}\rpformath$}}
\newcommand{\PRp}{\hyperlink{PRp}{$\lpformath\PRformath_{p}\rpformath$}}
\newcommand{\dint}{\int\!\!\!\!\!\int}
\def\Yint#1{\mathchoice
	{\YYint\displaystyle\textstyle{#1}}%
	{\YYint\textstyle\scriptstyle{#1}}%
	{\YYint\scriptstyle\scriptscriptstyle{#1}}%
	{\YYint\scriptscriptstyle\scriptscriptstyle{#1}}%
	\!\dint}
\def\YYint#1#2#3{{\setbox0=\hbox{$#1{#2#3}{\iint}$}
		\vcenter{\hbox{$#2#3$}}\kern-.51\wd0}}
\def\longdash{\mkern-1.5mu{-}\mkern-7.5mu{-}} 
\def\fiint{\Yint\longdash}
\newcommand{\Z}{{\mathbb Z}}
\theoremstyle{plain}
\newtheorem{theorem}[equation]{Theorem}
\newtheorem{lemma}[equation]{Lemma}
\newtheorem{definition}[equation]{Definition}
\theoremstyle{definition}
\theoremstyle{remark}
\newtheorem{remark}[equation]{Remark}
\begin{document}

\title{Localization and interpolation of parabolic $L^p$ Neumann problems}

\author[Dindo\v{s}]{Martin Dindo\v{s}}
\address{School of Mathematics, 
The University of Edinburgh and Maxwell Institute of Mathematical Sciences, Edinburgh, UK}
\email{M.Dindos@ed.ac.uk}

\author[Li]{Linhan Li}
\address{School of Mathematics, The University of Edinburgh and Maxwell Institute of Mathematical Sciences, Edinburgh, UK}
\email{linhan.li@ed.ac.uk}

\author[Pipher]{Jill Pipher}
\address{Department of Mathematics, 
 Brown University, RI, US}
\email{jill\_pipher@brown.edu}


\maketitle

\begin{abstract} 
We show a localization estimate for local solutions to the parabolic equation $-\dr_t u+\divg (A\nabla u)=0$ with zero Neumann data, assuming that the $L^p$ Neumann problem and $L^{p'}$ Dirichlet problem for the adjoint operator are solvable in a Lipschitz cylinder for some $p\in(1,\infty)$. Using this result, we establish the solvability of the Neumann problem in the atomic Hardy space for parabolic operators with bounded, measurable, time-dependent coefficients, and hence obtain the extrapolation of solvability of the $L^p$ Neumann problem.   
\end{abstract}

\tableofcontents



\section{Introduction}
The main objective of this paper is to establish a localization result for a parabolic Neumann problem for equations of the form
$L u= -\dr_t u +\divg (A \nabla u)=0$. The coefficients of the elliptic matrix $A$ are assumed to be bounded, measurable and are allowed to be time varying. In both the elliptic and parabolic settings, such an estimate is key to developing a complete theory of $L^p$ solvability of boundary value problems. Indeed, we go on to show (Section \ref{S5}) how our estimate can be used for interpolation of Neumann solvability, i.e., assuming the solvability of the $L^p$ parabolic Neumann problem (and also its adjoint $L^{p'}$ Dirichlet problem)
one can obtain solvability of the $L^q$ parabolic Neumann problem for all $1<q\leq p$.
The localization estimate says that, on the portion of the Neumann boundary where the datum vanishes, the nontangential maximal function of the gradient $\nabla u$ has an upper bound in terms of  $\nabla u$ just on an enlarged Carleson region.  Localization
in the context of Neumann boundary
conditions
has previously only been established for certain special operators such as the heat operator using different methods (where solvability on subdomains was used).\medskip

More precisely, with appropriate assumptions on the 
$L^p$ solvability of a parabolic Neumann problem,
we seek to establish the estimate
\begin{equation} \label{loc1x}
\|\wt N(|\nabla u|\1_{J_r})\|_{L^p(\partial \Omega)} \leq C r^{(n+1)/p}\left ( \fiint_{J_{2r}\cap \om} |\nabla u|^2 dXdt \right)^\frac12,
\end{equation}
for a backward in time parabolic cube $J_r$ centered at a boundary point such that the Neumann data $\partial^A_\nu u=0$ in $J_{2r}\cap\partial\Omega$. Here $Lu=0$, where
\begin{equation}\label{E:pde}
			L u:= -\dr_t u +\divg (A \nabla u)=0   \quad\text{in } \Omega, 
\end{equation}
and the domain $\Omega = \mathcal O \times \R$ is a bounded or unbounded Lipschitz cylinder, i.e., $\mathcal O = \R^n_+$ or $\mathcal O$ is a bounded or unbounded Lipschitz domain. We assume that  $A= [a_{ij}(X, t)]$ is a $n\times n$ bounded matrix satisfying the uniform ellipticity condition for a.e. $X \in \mathcal O$, $t\in \R$.
That is, there exist positive constants $\lambda$ and $\Lambda$ such that
\begin{equation}
	\label{E:elliptic}
	\lambda |\xi|^2 \leq \sum_{i,j} a_{ij}(X,t) \xi_i \xi_j,\qquad |A(X,t)| \leq \Lambda,
\end{equation}
for almost every $(X,t) \in \Omega$ and all $\xi \in \R^n$.\medskip

Estimates of the form \eqref{loc1x} for both elliptic and parabolic PDEs have multiple uses and serve as a tool for establishing Hardy space atom estimates for both Regularity and Neumann boundary value problems. When instead of the vanishing Neumann boundary condition we assume vanishing Dirichlet condition, the estimate is rather easy to prove under the assumption that the elliptic or (parabolic) measure of the adjoint PDE belongs to the $A_\infty(d\sigma)$ class. In this case the estimate holds for $p\in (1,p_0)$,
where $p_0$ is the value such that for all $q>p_0'$ the $L^q$ Dirichlet problem for the adjoint operator $L^*$ is solvable. See for example \cite{DK} treating the elliptic case when the boundary is Lipschitz, or \cite{DiS} for the setting of parabolic PDEs on a Lipschitz cylinder. 

The parabolic Dirichlet localization result has been used in, for example, the paper \cite{DiS} where interpolation and atomic estimates for the Regularity problem were established, or \cite{DLP} where the Regularity problem for the parabolic PDE is solved in the optimal range on a Lipschitz cylinder 
under the assumption that the coefficients of the PDE satisfy certain natural Carleson conditions (a parabolic analogue of so-called DKP-condition). There are also a number of earlier papers making use of localization estimates to show Hardy space estimates in the context of Dirichlet, Regularity or Neumann boundary data.
\medskip 

The first and main objective of this paper is to prove the following theorem.
\begin{theorem}\label{thm.NtoLoc}
Let $\om=\mathcal O\times\R$ where $\mathcal O$ is either a bounded or unbounded Lipschitz domain in $\Rn$. Let $L=-\dr_t +\divg (A\nabla \cdot)$, and let $L^*=\dr_t+\divg(A^T\nabla\cdot)$ be the adjoint operator of $L$.
Let $p\in (1,\infty)$. Suppose that \Np$^L$ and \Dq$^{L^*}$ are solvable in $\om$ (see Definitions~\ref{def.Dq}, \ref{def.Np}).
    Let $Q_0:=Q_0'\times(T_0,T_1)$ be a cube centered at $\pom$. Then for any $(\bar x,\bar t)\in Q_0\cap\pom$, any backward parabolic cube $J_r(\bar x,\bar t)$ that satisfies $J_{2r}(\bar x,\bar t)\subset Q_0$, and any weak solution $u$ to $Lu=0$ in $Q_0\cap\om$ with zero Neumann data on $Q_0\cap\pom$, we have
\begin{equation*}
\|\wt N(|\nabla u|\1_{J_r(\bar x,\bar t)})\|_{L^p(\partial \Omega)} \leq C r^{(n+1)/p}\left ( \fiint_{J_{2r}(\bar x,\bar t)\cap \om} |\nabla u|^2 dXdt \right)^\frac12.
\end{equation*}
\end{theorem}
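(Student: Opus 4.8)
The plan is to reduce the localization estimate to the solvability hypotheses by a duality argument: since $\widetilde N$ is an $L^p$ object, I would test $\|\widetilde N(|\nabla u|\1_Q)\|_{L^p}$ against a function $g\in L^{p'}(\pom)$ with $\|g\|_{p'}=1$, and try to bound $\iint |\nabla u|\1_Q \cdot (\text{something built from }g)$. The natural ``something'' is the solution $v$ of the adjoint Dirichlet problem $L^* v = 0$ with $v=g$ on a suitably enlarged piece of $\pom$ (cut off away from $2Q$), for which \Dq$^{L^*}$ gives $\|\widetilde N v\|_{p'}\lesssim \|g\|_{p'}$, or alternatively the solution of the adjoint Neumann problem with atomic data supported near $Q$; the choice is dictated by which Green-type identity against $u$ produces the left-hand side. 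Because $u$ has \emph{zero Neumann data} on $2Q\cap\pom$, integrating $\divg$ by parts against such a $v$ kills the boundary term on $2Q\cap\pom$, which is exactly the mechanism that makes the estimate local.

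Concretely, I would proceed as follows. First, fix a smooth parabolic cutoff $\eta$ equal to $1$ on $Q$, supported in, say, $\tfrac32 Q$, and split $u$ using the zero-Neumann condition: one shows that on $2Q\cap\om$, $u$ is close (up to the lower-order Carleson/energy error controlled by $\fiint_{2Q}|\nabla u|^2$) to a function whose nontangential behaviour can be captured by a layer-potential or a reflected-solution representation. Second, for the nontangential maximal function: at a boundary point $P$ outside $Q$ the cone may still reach into $2Q\cap\om$, so I must estimate $\widetilde N(|\nabla u|\1_Q)(P)$ for all $P\in\pom$, not just near $Q$; away from $2Q$ this is handled by interior/boundary Caccioppoli plus the De Giorgi–Nash–Moser estimates for the parabolic equation with zero Neumann data (even/odd reflection across the flat part after a bi-Lipschitz flattening), giving pointwise control by $r$-scaled averages of $|\nabla u|$ over $2Q\cap\om$. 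Third, the genuinely $p$-dependent part: near $Q$ one passes from the interior square-function/energy quantity $\fiint_{2Q}|\nabla u|^2$ to the $L^p$ norm of $\widetilde N(|\nabla u|\1_Q)$ using the solvability of \Np$^L$ and \Dq$^{L^*}$ — this is the step where one writes $u$ in $2Q\cap\om$ as (Neumann datum extension) $+$ (harmonic-type correction), applies the a priori \Np$^L$ estimate to the piece with prescribed Neumann data and the adjoint \Dq$^{L^*}$ estimate in the dual pairing to absorb the remaining term, and iterates away the error with a Carleson-measure/good-$\lambda$ argument.

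The main obstacle, as usual in these localization theorems, is the \emph{self-improvement / absorption step}: the crude first-pass estimate will bound $\|\widetilde N(|\nabla u|\1_Q)\|_{L^p}$ by $Cr^{(n+1)/p}(\fiint_{2Q}|\nabla u|^2)^{1/2}$ \emph{plus} a term of the same form with $Q$ replaced by a slightly larger cube $(1+\epsilon)Q$ and a small constant, or a term involving $\|\widetilde N(|\nabla u|\1_{2Q})\|_{L^p}$ with a constant that is not yet small; removing this requires either a covering/iteration over a dyadic family of cubes between $Q$ and $2Q$ with geometrically decaying constants, or a real-variable stopping-time argument, and one must check the parabolic scaling (the anisotropic cube $Q_r$, the $t$-direction of half-derivative order, and the $r^{(n+1)/p}$ normalization reflecting $|Q|\sim r^{n+1}$ in parabolic measure) is respected at every stage. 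A secondary technical point is that $\widetilde N$ involves the \emph{non-tangential maximal function of the gradient}, so every reflection/flattening must be done carefully enough to keep $\nabla u$ (not just $u$) under control across the boundary, which is why the hypothesis also invokes the adjoint Dirichlet problem \Dq$^{L^*}$ rather than just the Neumann problem alone.
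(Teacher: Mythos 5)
Your high-level instincts (duality, a cutoff supported in $2Q$, zero Neumann data killing boundary terms, Caccioppoli/Moser with reflection) match some ingredients of the paper's argument, but the proposal misses the central mechanism and in a couple of places points in a direction the authors explicitly say does not work.

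First, the duality you want to use is not the right one. Pairing $\wt N(|\nabla u|\1_Q)$ against a boundary function $g\in L^{p'}(\pom)$ does not turn the problem into a volume integral that you can integrate by parts. The paper instead uses the tent-space duality \eqref{eq.NpC} between $\wt N$ and $\wt\C$, which produces an \emph{interior} function $G\in L^\infty_c(\Omega)$ with $\|\wt\C(\delta G)\|_{L^{p'}}\lesssim 1$, and reduces the task to bounding $\iint_\Omega \1_Q \nabla u\cdot G\,dXdt$. This is what lets one write the pairing against $v$ solving an \emph{inhomogeneous} adjoint problem $L^*v=-\divg G$ rather than a homogeneous boundary-value problem with boundary data $g$.

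Second — and this is the key missing idea — the paper does not apply \Np$^L$ or \Dq$^{L^*}$ directly to $u$ near $Q$ and it has no self-improvement/good-$\lambda$ loop. Instead the argument goes through an intermediate family of \emph{Poisson--Neumann problems} (following \cite{FL} in the elliptic case). The chain is: \Np$^L$ and \Dq$^{L^*}$ imply the weak Poisson--Neumann--Regularity estimate \wPRNp$^L$ (Lemma~\ref{lem.N+D=wPNR}, via the decomposition $u=u_D+w$ where $u_D$ solves the Poisson--Dirichlet problem and $w$ carries the residual Neumann data); \wPRNp$^L$ implies the weak Poisson--Neumann estimate \wPNq$^{L^*}$ by duality (Lemma~\ref{lem.wPNR=wPN}); and \wPNq$^{L^*}$ alone yields the localization (Lemma~\ref{LemmaM}). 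In Lemma~\ref{LemmaM} the auxiliary solution $v$ has zero \emph{Neumann} data (not Dirichlet), and the technical core is a Harnack-chain/Whitney-cube estimate \eqref{eq.v-Nv} bounding $\fiint|v-c_v|$ by $\|\wt N(\delta|\nabla v|)\|_{L^1}+\|\wt\C(\delta G)\|_{L^1}$ — not an absorption argument.

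Third, your proposal of writing $u$ near $Q$ as ``(Neumann datum extension) $+$ (correction)'' and invoking \Np$^L$ and \Dq$^{L^*}$ directly is essentially the Kenig--Pipher route, which uses the Regularity problem in an essential way; the remark after Theorem~\ref{thm.NtoLoc} explains that the authors deliberately avoid this in the parabolic setting because the Regularity problem carries a half-derivative in time and is hard to localize. So the genuinely new content you would need — introducing the parabolic Poisson--Neumann problem and proving the chain of implications above — is absent from the proposal, and the iteration/stopping-time step you flag as the main obstacle is not actually present in the paper's proof.
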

\begin{remark}
     A note on the proof strategy: It might seem natural to follow the original strategy taken in \cite[Theorem 6.10]{KP93} which established Neumann localization in the elliptic setting. The proof of their result required assuming solvability of both the $L^p$ Neumann and Regularity problems, and using the Regularity result in a key fashion. However, in the parabolic case, we wish to avoid working via the $L^p$ Regularity problem as it involves a half-derivative in time on the boundary data, making localization estimates significantly more difficult to obtain. Fortunately, an analogous elliptic Neumann localization result for more general boundaries was established in \cite{FL} (along with its implications), via a method that dispensed with the assumption on the Regularity problem. In \cite{FL}, they achieved the desired localization estimate by introducing and studying the Poisson–Neumann problem and its weak variants. This is
     the strategy we employ here, introducing certain analogous parabolic problems and adapting the arguments of \cite{FL} to the time dependent parabolic setting.
\end{remark}
\smallskip

The second objective of this paper is to present an important application of the localization estimate, formulated in the following theorem.

\begin{theorem}\label{MT} Let $\Omega=\mathcal O\times\R$, where 
$$\mathcal O=\{(x,x_n): x_n>\varphi(x)\},\qquad\mbox{for }x\in\R^{n-1},$$
for a Lipschitz function $\varphi:\R^{n-1}\to\R$. Consider the PDE 
$$L u= -\dr_t u +\divg (A \nabla u)=0   \quad\text{in } \Omega.$$ 

Assume that for some $p\in(1,\infty)$ the $L^p$ Neumann problem \Np$^L$  and the $L^{p'}$ Dirichlet problem  \Dq$^{L^*}$ for the adjoint PDE $L^*u=0$ are both solvable (as defined in Section \ref{S2}).  
 
 Then for all $1<q<p$ the $L^q$ Neumann problem \Nq$^L$ is also solvable. 
\end{theorem}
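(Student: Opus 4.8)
The plan is to derive Theorem~\ref{MT} from Theorem~\ref{thm.NtoLoc} by the standard route: first prove that the $L^p$ Neumann problem \Np$^L$ together with the $L^{p'}$ Dirichlet problem \Dq$^{L^*}$ imply solvability of a \emph{parabolic Hardy space $H^1$ atomic} Neumann problem, and then interpolate between $H^1$ and $L^p$ (by a Marcinkiewicz-type argument adapted to the nontangential maximal function) to land in $L^q$ for every $1<q<p$. The localization estimate of Theorem~\ref{thm.NtoLoc} is precisely the tool that converts the ``global'' $L^p$ bound into a \emph{uniform} estimate on individual atoms supported in a boundary cube $Q$.

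**The key steps, in order.** First, fix a parabolic atom $a$ on $\pom$: $\supp a \subset Q = Q_r(x_0,t_0)$, $\|a\|_{L^2(\pom)} \le |Q|^{-1/2}$ (with the parabolic measure $|Q| \sim r^{n+1}$), and $\iint_{\pom} a\, d\sigma\dtau = 0$. Let $u$ be the solution to $Lu=0$ in $\om$ with Neumann data $\dr^A_\nu u = a$ on $\pom$; existence and an $L^2$ bound $\|\wt N(|\nabla u|)\|_{L^2(\pom)} \le C\|a\|_{L^2}$ come from \Np$^L$ (or, if needed, from the $L^2$ Neumann theory used to set up \Np). The task is to show $\|\wt N(|\nabla u|)\|_{L^1(\pom)} \le C$ uniformly in $a$. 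Split $\pom$ into the ``local'' piece $C_0 Q \cap \pom$ and the ``far'' piece $\pom \setminus C_0 Q$ for a large dimensional constant $C_0$. On the local piece, use Hölder's inequality and the $L^2$ bound on $\wt N(|\nabla u|)$ together with $\|a\|_{L^2}\le |Q|^{-1/2}$: $\|\wt N(|\nabla u|)\|_{L^1(C_0 Q\cap\pom)} \le |C_0 Q|^{1/2}\|\wt N(|\nabla u|)\|_{L^2} \le C|Q|^{1/2}\cdot|Q|^{-1/2} = C$.

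**The far piece is where Theorem~\ref{thm.NtoLoc} and the mean-zero condition do the work.** Away from $C_0 Q$, the Neumann data of $u$ vanishes on $2\wt Q \cap \pom$ for dyadic dilates $\wt Q = 2^k Q$, $k\ge 1$, so the localization estimate gives
\begin{equation*}
\|\wt N(|\nabla u|\1_{A_k})\|_{L^p(\pom)} \le C (2^k r)^{(n+1)/p}\left(\fiint_{2^{k+1}Q\cap\om}|\nabla u|^2\,dX\dtau\right)^{1/2},
\end{equation*}
where $A_k = (2^{k+1}Q\setminus 2^k Q)\cap\pom$. To control the right-hand side one needs Caccioppoli on the parabolic annulus together with a decay estimate for $u$ itself: because $a$ has mean zero, the solution behaves (in an averaged sense) like the gradient of a Neumann function with a dipole-type singularity, so $\left(\fiint_{2^{k+1}Q\cap\om}|\nabla u|^2\right)^{1/2} \lesssim (2^k r)^{-1}\cdot(2^k r)^{-(n+1)/2}$ — an extra power of $(2^k r)^{-1}$ beyond the ``generic'' Caccioppoli bound, gained from cancellation. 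This is exactly the point where \Dq$^{L^*}$ enters: the decay of $u$ at scale $2^k r$ is obtained by testing against the adjoint solution and invoking the $L^{p'}$ Dirichlet estimate for $L^*$ (this is the mechanism by which one upgrades a ``no-cancellation'' bound to one using $\iint a = 0$; it is the parabolic analogue of the elliptic argument in \cite{FL}). Summing, $\|\wt N(|\nabla u|)\|_{L^1(\pom\setminus C_0 Q)} \le \sum_{k\ge 1}|2^{k+1}Q|^{1-1/p}\|\wt N(|\nabla u|\1_{A_k})\|_{L^p} \lesssim \sum_{k\ge 1}(2^k r)^{(n+1)(1-1/p)}\cdot(2^k r)^{(n+1)/p}\cdot (2^k r)^{-1}(2^k r)^{-(n+1)/2} \cdot(\text{Hölder factor})$, and after bookkeeping the geometric series converges, giving the uniform atomic bound. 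Finally, a duality/interpolation step: the map $a \mapsto \wt N(|\nabla u|)$ being bounded $L^2\to L^2$ and (on atoms) $H^1 \to L^1$ yields, via the real interpolation of the parabolic Hardy space with $L^2$ (or a direct good-$\lambda$/Calderón–Zygmund decomposition of a general Neumann datum $f\in L^q$ into atoms plus an $L^2$ part), the solvability of \Nq$^L$ for all $1<q<2$; combined with the hypothesis at $p$ one then also covers $2\le q<p$ by interpolating between the $H^1$-type endpoint and the given $L^p$ bound.

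**The main obstacle** I anticipate is the cancellation/decay estimate for $u$ on the far annuli — making rigorous the claim that the mean-zero property of the atom buys an extra factor $(2^k r)^{-1}$ in $\left(\fiint_{2^{k+1}Q\cap\om}|\nabla u|^2\right)^{1/2}$. In the elliptic case this is handled via the Neumann Green function and its gradient bounds; in the parabolic, time-dependent setting one must instead argue through the solvability of \Dq$^{L^*}$ and a careful choice of test function localized to the annulus, controlling the parabolic nontangential behavior and the backward-in-time nature of $L^*$. A secondary technical point is to make sure the parabolic $H^1$ atoms are defined with the correct parabolic scaling (the ``time'' direction scales like length squared, so $|Q_r|\sim r^{n+1}$ and the half-derivative structure on the boundary must be respected), and that the interpolation theorem being invoked is available for parabolic Hardy spaces on a Lipschitz cylinder — this should follow from the general machinery once the two endpoint estimates are in hand, but the verification of hypotheses is not entirely routine.
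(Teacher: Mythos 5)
Your high-level skeleton agrees with the paper's: reduce to an atomic $H^1 \to L^1$ endpoint bound for the Neumann nontangential maximal function, split $\pom$ into a near piece treated by H\"older plus $L^p$ (or $L^2$) Neumann solvability and a sequence of far dyadic annuli treated via Theorem~\ref{thm.NtoLoc}, then interpolate with the assumed \Np$^L$ to get \Nwp{q} for $1<q<p$. The problem is in the heart of the far-field estimate, where there is both a wrong mechanism and a quantitatively insufficient decay rate.

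On the mechanism: you assert that the decay of $u$ away from $Q$ ``is obtained by testing against the adjoint solution and invoking the $L^{p'}$ Dirichlet estimate for $L^*$.'' That is not how the paper argues, and it is not clear that this route can be made to work in the parabolic, time-dependent setting (it would require a handle on the adjoint Neumann Green function that the paper explicitly avoids). In the paper, \Dq$^{L^*}$ enters \emph{only} through Theorem~\ref{thm.NtoLoc}; the decay of the solution itself is proved by an entirely different route: even reflection of $u$ across the Lipschitz boundary to get a local solution of a reflected parabolic PDE on $\R^{n+1}\setminus \overline{Q_1(0,0)}$, a chaining/boundedness argument from \Np$^L$, a representation of $u$ via Aronson's Gaussian bounds for the fundamental solution (Lemma~\ref{L:Caccio} plus \eqref{KE}), the divergence-theorem observation that the mean-zero property of the atom forces $\int_{\R^n} \tilde u(X,10)\,dX = 0$, and finally the elementary decay Lemma~\ref{lem.decay}. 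That whole machine is what produces the power decay.

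On the exponent: you claim $\bigl(\fiint_{2^{k+1}Q\cap\om}|\nabla u|^2\bigr)^{1/2} \lesssim (2^k r)^{-1}(2^k r)^{-(n+1)/2}$. This is not enough. Running your own bookkeeping with Theorem~\ref{thm.NtoLoc} and H\"older on each annulus gives a term of size $(2^k r)^{(n+1)/p'}\cdot (2^k r)^{(n+1)/p}\cdot (2^k r)^{-1-(n+1)/2} = (2^k r)^{(n-1)/2}$, whose sum over $k$ diverges for every $n\geq 1$. The paper instead establishes the pointwise bound $|\tilde u(X,t)|\leq \|(X,t)\|^{-n-\eta}$ (estimate \eqref{decay}), which after Caccioppoli gives an $L^2$-average decay of $\nabla u$ of order $r_k^{-1-n-\eta}$ and leads to the convergent series $\sum_k r_k^{-\eta}$. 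So you are missing roughly a factor of $(2^k r)^{(n+1)/2-n-\eta}$, and the missing power is exactly what the mean-zero cancellation plus the Aronson kernel argument (Lemma~\ref{lem.decay}) supplies. Until that decay estimate is established by a correct mechanism with the correct rate, the far-field sum does not close.

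Two smaller points: the paper works with $L^\infty$ atoms and directly with the $L^p$ Neumann bound on the near piece (see \eqref{eq5.5}), not an $L^2$ bound, though this is a cosmetic difference; and the final interpolation covers all $1<q<p$ at once from the $H^1\to L^1$ endpoint and the $L^p\to L^p$ bound, so the case split ``$1<q<2$ then $2\leq q<p$'' you sketch is unnecessary.
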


 For specific operators (such as the heat equation on Lipschitz cylinders, or even more general domains), this result is known (cf.\ \cite{B}), but here it is established for operators with no particular assumptions on coefficients beyond ellipticity. Hence this result is more analogous to previously established elliptic theory for the Neumann problem as in \cite{KP93}. Further, we expect that this interpolation result will be very
useful for studying the Neumann problem for classes of specific operators. One such example is our paper in preparation \cite{DLP2}, applying the main result here to a subclass of operators of the type considered in \cite{DLP} and showing optimal $L^p$ solvability of the Neumann problem for such PDEs when the Carleson measure condition on the coefficients is assumed to be sufficiently small.
 
 \section{Definitions}\label{S2}

 \begin{definition}
We define a parabolic cube on $\R^n\times\R$  centered at $(X,t)$ with sidelength $r$ as
$$    Q_r(X,t):=\{ (Y, s) \in \R^{n}\times\R : |x_i - y_i| < r \ \text{ for } 1 \leq i \leq n, \ | t - s |^{1/2} < r \},
$$
and we write $\ell(Q_r)=r$.
When writing a lower case point $(x,t)$ we shall mean a boundary parabolic cube on $\R^{n-1}\times\R$
which has an analogous definition but in one less spatial dimension:
\begin{equation}\label{eqdef.bdypcube}
    Q_r(x,t):=\{ (y, s) \in \R^{n-1}\times\R : |x_i - y_i| < r \ \text{ for } 1 \leq i \leq n-1, \ | t - s |^{1/2} < r \}.
\end{equation}
We use the notation $Q'_r(X)$ to denote the cube in $\Rn$ centered at $X\in\Rn$ with sidelength $r$, unless specified otherwise.

For $(X,t)\in \R^n\times\R$, we denote by $J_r(X,t)$ the backward in time parabolic cube, that is,
\begin{equation}\label{eqdef.bwpcube}
   J_r(X,t): =Q_r'(X)\times (t-r^2,t). 
\end{equation}

A parabolic ball on $\R^n\times\R$  centered at $(X,t)$ with radius $r$ is the ball
\begin{equation}\label{eqdef.ball}
    B_r(X,t):=\{ (Y, s) \in \R^{n}\times\R : d_p((X,t),(Y,s))<r \},
\end{equation}
where $d_p(\cdot,\cdot)$ is the parabolic distance function
\[
d_p((X,t),(Y,s)) := \br{\abs{X-Y}^2+\abs{t-s}}^{1/2}.\]

For parabolic balls at the boundary we use notation $\Delta_r(X,t)=B_r(X,t)\cap \pom$. In the special case $\Omega=\R^n_+\times\R$ we drop the last coordinate and also write $\Delta_r(x,t)$ with understanding that the ball is centered at $(X,t)=(x,0,t)$.
\end{definition}

\begin{definition}
For $a>0$ and $(q,\tau)\in\pom$, unless otherwise defined, we denote the non-tangential parabolic cones by
\begin{equation}\label{Gamma2.11}
    \Gamma_a(q,\tau):=\set{(X,t)\in\om: d_p((X,t),(q,\tau))<(1+a)\delta(X,t)},
\end{equation}
and $\delta(\cdot)$ is the parabolic distance to the boundary: 
\[
\delta(X, t) = \inf_{(q,\tau)\in\pom}
d_p((X, t),(q, \tau)).\]
We also use the truncated parabolic cones: for $r>0$,
\[
\Gamma_a^r(q,\tau):=\set{(X,t)\in\om: d((X,t),(q,\tau))<(1+a)\delta(X,t),\, \delta(X,t)<r}
\]
is the parabolic cone with vertex $(q,\tau)$ truncated at height $r$.
\end{definition}

\subsection{Tent spaces}

\begin{definition}
For $w\in L^\infty_{\loc}(\om)$, we define the non-tangential maximal function of $w$ as
\[
 N_a(w)(q,\tau):=\sup_{(X,t)\in\Gamma_a(q,\tau)}\abs{w(X,t)} \quad\text{for }(q,\tau)\in\pom,
\]
the parabolic area functional  
\[
\mathcal{A}^a_1(w)(q,\tau):=\iint_{\Gamma_a(q,\tau)}|w(X,t)|\frac{dXdt}{\delta(X,t)^{n+2}},
\]
and the parabolic Carleson functional
\[
\mathcal{C}_1(w)(q,\tau):=\sup_{r>0}\frac1{\sigma(B_r(q,\tau)\cap\pom)}\iint_{B_r(q,\tau)\cap\om}|w(X,t)|\frac{dXdt}{\delta(X,t)},
\]
where $\sigma = \H^{n+1}_p|_{\pom}$ is the $(n+1)$-dimensional parabolic Hausdorff measure\footnote{For a set $E \subset \mathbb{R}^{n+1}$, the $(n+1)$-dimensional parabolic Hausdorff measure is defined as $\H^{n+1}_{p}(E) := \lim_{\epsilon \to 0^+} \H^{n+1}_{p,\epsilon}(E)$, where \(\H^{n+1}_{p,\epsilon}(E) := \inf\left\{\sum_i \diam(E_i)^s: E \subseteq \cup_i E_i, \diam(E_i) \le \epsilon \right\},\) and $\diam(E):=\sup_{(X,t),(Y,s)\in E}d_p((X,t),(Y,s))$.
} restricted to $\pom$.
If $w\in L^p_{\loc}(\om)$\footnote{with respect to the $(n+1)$-dimensional Lebesgue measure}, $p\in(0,\infty)$, we define for $c\in(0,1)$ the modified non-tangential maximal function
\begin{equation}\label{def.Nap}
    \wt N_{a,p}^c(w)(q,\tau):=\sup_{(X,t)\in\Gamma_a(q,\tau)}\br{\fiint_{Q_{c\delta(X,t)/2}(X,t)}\abs{w(Y,s)}^pdYds}^{1/p} \quad\text{for }(q,\tau)\in\pom,
\end{equation}
the modified parabolic area functional 
\[
\mathcal{\wt A}^{a,c}_{1,p}(w)(q,\tau):=\iint_{\Gamma_a(q,\tau)}\br{\fiint_{Q_{c\delta(X,t)/2}(X,t)}\abs{w(Y,s)}^pdYds}^{1/p}\frac{dXdt}{\delta(X,t)^{n+2}},
\]
and the modified Carleson functional 
\[
\mathcal{\wt C}^c_{1,p}(w)(q,\tau):=\sup_{r>0}\frac1{\sigma(B_r(q,\tau)\cap\pom)}\iint_{B_r(q,\tau)\cap\om}\br{\fiint_{Q_{c\delta(X,t)/2}(X,t)}\abs{w(Y,s)}^pdYds}^{1/p}\frac{dXdt}{\delta(X,t)}.
\]
We simply denote 
\begin{equation}\label{def.N2}
    \wt N(w):=\wt N_{1,2}^{1/2}(w), \quad \mathcal{\wt A} (w):=\mathcal{\wt A}_{1,2}^{1,1/2} (w), \quad \mathcal{\wt C}:=\mathcal{\wt C}_{1,2}^{1/2}. 
\end{equation}
\end{definition}

The following lemma states that the $L^p$ norms remain comparable---up to multiplicative constants---when the apertures and the sizes of the Whitney cubes are changed in the definitions of the modified non-tangential maximal function, the area functional, and the Carleson functional. In the isotropic setting, these facts are well known. For instance, the invariance under changes of aperture follows from the argument in \cite[Proposition 4]{CMS}; the effect of changing the Whitney cube sizes is treated in \cite{HR13} and also in \cite[Lemma 2.2]{MPT} for domains with more general boundaries. Moreover, the equivalence \eqref{eq.C=A} follows from the argument in \cite[Theorem 3]{CMS}. These arguments extend to the parabolic setting with only minor modifications, and we therefore omit the details.
\begin{lemma} \label{lemNCA}
Let  $a >1$, $c\in (0,1)$, and $p\in (0,\infty]$. Then 
\[\|\wt N^{c}_{a,2}(u)\|_{L^p(\partial \Omega,\sigma)} \approx \|\wt N(u)\|_{L^p(\partial \Omega,\sigma)},\]
\begin{equation}\label{eq.Aac=A}
    \|\wt \A_{1,2}^{a,c}(u)\|_{L^p(\partial \Omega,\sigma)} \approx \|\wt \A(u)\|_{L^p(\partial \Omega,\sigma)},
\end{equation}
and
\[\|\wt \C_{1,2}^{c}(u)\|_{L^p(\partial \Omega,\sigma)} \approx \|\wt \C(u)\|_{L^p(\partial \Omega,\sigma)},\]
where the constants involved depend on $\Omega$, $a$, $c$ and $p$. Moreover, if $p\in (1,\infty)$, then
\begin{equation}\label{eq.C=A}
    \|\wt \C(u)\|_{L^p(\partial \Omega,\sigma)} \approx \|\wt \A(u)\|_{L^p(\partial \Omega,\sigma)},
\end{equation}
where the constants depend on $\Omega$ and $p$.
\end{lemma}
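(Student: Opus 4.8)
\emph{Plan.} I would prove each of the four equivalences by transplanting the classical isotropic arguments of \cite{CMS}, \cite{HR13}, and \cite[Lemma 2.2]{MPT} to the parabolic metric $d_p$. The only genuinely new ingredient is the geometry of the parabolic Whitney cubes $Q_{c\delta(X,t)/2}(X,t)$ together with the parabolic Ahlfors regularity of $\sigma$ on $\pom=\partial\mathcal O\times\R$, namely $\sigma(\Delta_r(q,\tau))\approx r^{n+1}$; this guarantees that $\sigma$ is doubling and that the Hardy--Littlewood maximal operator $M_\sigma$ on $(\pom,\sigma)$ is of weak type $(1,1)$ and bounded on $L^p(\pom,\sigma)$ for $p\in(1,\infty]$. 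Two elementary geometric facts will be used repeatedly: (i) if $(Y,s)\in Q_{c\delta(X,t)/2}(X,t)$ then $\delta(Y,s)\approx\delta(X,t)$ with constants depending only on $c$; and (ii) given $a'>a>1$ and $c,c'\in(0,1)$, there is $M=M(a,a',c,c')$ so that $Q_{c'\delta(X,t)/2}(X,t)$ is covered by at most $M$ Whitney cubes $Q_{c\delta(X_i,t_i)/2}(X_i,t_i)$ with $\delta(X_i,t_i)\approx\delta(X,t)$, and if $(X,t)\in\Gamma_a(q,\tau)$ then all these centers lie in $\Gamma_{a'}(q,\tau)$ once $a'$ is large enough. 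I will also use the shadow estimate $\sigma(\{(q,\tau)\in\pom:(X,t)\in\Gamma_a(q,\tau)\})\approx\delta(X,t)^{n+1}$, which is immediate from the Ahlfors regularity of $\sigma$.

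\emph{Change of aperture and Whitney size for $\wt N$.} For $c\le c'$ and fixed aperture, the crude volume bound gives pointwise $\fiint_{Q_{c\delta(X,t)/2}(X,t)}|w|^2\le (c'/c)^{n+2}\fiint_{Q_{c'\delta(X,t)/2}(X,t)}|w|^2$, hence $\wt N^{c}_{a}(w)\le(c'/c)^{(n+2)/2}\wt N^{c'}_{a}(w)$. For the reverse, fact (ii) and the subadditivity of the average give $\wt N^{c'}_{a}(w)\lesssim\wt N^{c}_{a'}(w)$ pointwise for $a'$ large. It then remains to decrease the aperture, and here the standard cone-covering argument yields the inclusion $\{\wt N^{c}_{a'}(w)>\lambda\}\subseteq\{M_\sigma(\1_{\{\wt N^{c}_{a}(w)>\lambda\}})>c_0\}$; combining this with the boundedness of $M_\sigma$ on $L^p(\sigma)$ (for $p\le 1$ first passing to a suitable power and using the weak $(1,1)$ bound, exactly as in \cite[Proposition 4]{CMS}) gives $\|\wt N^{c}_{a'}(w)\|_{L^p(\sigma)}\lesssim\|\wt N^{c}_{a}(w)\|_{L^p(\sigma)}$. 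Chaining these proves the first asserted equivalence.

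\emph{Change of aperture and Whitney size for $\wt\A$ and $\wt\C$.} Writing $F(X,t):=\big(\fiint_{Q_{c\delta(X,t)/2}(X,t)}|w|^2\,dYds\big)^{1/2}$, Tonelli gives $\iint_{\pom}\wt\A^{a,c}_{1,2}(w)\,d\sigma=\iint_\om F(X,t)\,\sigma(\{(q,\tau):(X,t)\in\Gamma_a(q,\tau)\})\,\delta(X,t)^{-(n+2)}\,dXdt$, so the shadow estimate shows the right side is $\approx\iint_\om F\,\delta^{-1}\,dXdt$, independent of $a$; this handles the case $p=1$, and for general $p\in(0,\infty]$ one runs the same good-$\lambda$/cone-covering machinery as for $\wt N$ (cf.\ \cite[Proposition 4]{CMS}, \cite[Lemma 2.2]{MPT}). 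The Whitney-size change for $\wt\A$ is then identical to the one for $\wt N$: the crude volume bound in one direction, the finite Whitney covering (absorbing an aperture change) in the other. The Carleson functional $\wt\C$ is treated the same way, with the cone $\Gamma_a(q,\tau)$ replaced by $B_r(q,\tau)\cap\om$ and the normalization $\sigma(B_r(q,\tau)\cap\pom)\approx r^{n+1}$.

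\emph{The equivalence $\wt\C\approx\wt\A$ in $L^p$, $1<p<\infty$.} Set $F(X,t):=\big(\fiint_{Q_{\delta(X,t)/4}(X,t)}|w(Y,s)|^2\,dYds\big)^{1/2}\ge 0$, so that $\wt\A(w)=\mathcal A_1(F)$ and $\wt\C(w)=\mathcal C_1(F)$ with $\mathcal A_1,\mathcal C_1$ the functionals from the definitions, each linear in $F$. Thus \eqref{eq.C=A} reduces to the parabolic analogue of \cite[Theorem 3]{CMS}: $\|\mathcal A_1(F)\|_{L^p(\sigma)}\approx\|\mathcal C_1(F)\|_{L^p(\sigma)}$ for every nonnegative $F$ and $1<p<\infty$. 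Neither inequality is a pointwise bound; both follow from good-$\lambda$ inequalities relating $\mathcal A_1(F)$ to $\mathcal C_1(F)$ combined with the boundedness of $M_\sigma$ on $L^p(\sigma)$, or equivalently from parabolic tent-space duality. I expect this last equivalence to be the only step that requires any real care: the restriction $p>1$ is genuinely needed, since $\mathcal C_1$ is maximal in nature and the estimate $\|\mathcal C_1(F)\|_{L^1}\lesssim\|\mathcal A_1(F)\|_{L^1}$ already fails. Everything else is a direct substitution of the parabolic metric into the arguments of \cite{CMS}, \cite{HR13}, \cite{MPT}, once facts (i) and (ii) and the Ahlfors regularity of $\sigma$ recorded above are in hand.
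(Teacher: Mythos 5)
Your proposal follows the same route the paper takes: the paper simply cites \cite[Proposition 4]{CMS} for aperture invariance, \cite{HR13} and \cite[Lemma 2.2]{MPT} for the Whitney-size change, and \cite[Theorem 3]{CMS} for $\wt\C\approx\wt\A$, and then declares that these arguments extend to the parabolic setting with minor modifications. You fill in the geometric details (parabolic Ahlfors regularity, the shadow estimate, Whitney coverings) that make the transplantation work, and the outline is sound. One small correction to a side remark: at $p=1$ the bound $\|\mathcal C_1(F)\|_{L^1}\lesssim\|\mathcal A_1(F)\|_{L^1}$ actually \emph{holds} (this is Theorem~1 of \cite{CMS}); what fails is the reverse inequality $\|\mathcal A_1(F)\|_{L^1}\lesssim\|\mathcal C_1(F)\|_{L^1}$, which is why the equivalence \eqref{eq.C=A} requires $p>1$.
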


With those in hand, the space of functions $u$ for which $\|\wt N^{c}_{a,2}(u)\|_{p}$, $\|\wt \A^{a,c}_{1,2}(u)\|_{p}$ or $\|\wt \C_{1,2}^{c}(u)\|_{p}$ are finite doesn't depend on $a$ or $c$. We call them ($L^2$ averaged) tent spaces $\wt T^p_\infty(\Omega)$ and $\wt T^p_1(\Omega)$ whose norms are
\begin{align}
&\|u\|_{\wt T^p_\infty(\Omega)} := \|\wt N(u)\|_{L^p(\partial \Omega,\sigma)},\nonumber\\
&\|u\|_{\wt T^p_1(\Omega)} := \|\wt \A(u)\|_{L^p(\partial \Omega,\sigma)},\qquad\text{for }p\in(0,\infty),\quad
\|u\|_{\wt T^\infty_1(\Omega)} := \|\wt \C(u)\|_{L^\infty(\partial \Omega,\sigma)}.\nonumber
\end{align}

The next lemma gives duality between the modified Carleson functional and non-tangential maximal function. In the isotropic setting, these results are obtained in \cite{HR13} when $\om$ is the upper half-space and in \cite[Proposition 2.4]{MPT} when the domain is more general. In both works, they considered duality between $\wt N_{a,p}$ and $\wt\C_{1,p'}$ for $p>1$. The argument carries over to the parabolic settings, and since we only need the $L^2$ averages in the modified non-tangential maximal function and the Carleson functional, we state the result only for $\wt N$ and $\wt\C$.
\begin{lemma}[Duality between $\wt\C$ and $\wt N$] If  $p\in [1,\infty)$, then for any $u\in \wt T^p_\infty(\Omega)$ and any $F$ that satisfies  $\delta F\in \wt T^{p'}_1(\Omega)$,
\begin{equation}\label{eq.CNdual}
    \abs{\iint_{\om}uF\,dXdt}\lesssim \|\wt\C(\delta F)\|_{L^{p'}(\pom)}\|\wt N(u)\|_{L^p(\pom)}.
\end{equation}
Moreover
\begin{equation}\label{eq.NpC}
    \|\wt N(u)\|_{L^p(\pom)}\lesssim \sup_{F:\|\wt\C(\delta F)\|_{L^{p'}(\pom)}=1}\abs{\iint_{\om}uF\,dXdt},
\end{equation}
and 
\begin{equation}\label{eq.NpC2}
    \|\wt\C(u)\|_{L^{p'}(\pom)}\lesssim \sup_{F:\|\wt N(\delta F)\|_{L^{p}(\pom)}=1}\abs{\iint_{\om}uF\,dXdt}.
\end{equation}
\end{lemma}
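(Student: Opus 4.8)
The plan is to reproduce, in the parabolic metric, the classical tent-space duality argument of Coifman--Meyer--Stein \cite{CMS} in the form adapted to the ($L^2$-averaged) non-tangential maximal function by Hyt\"onen--Ros\'en \cite{HR13} and Martell--Prisuelos-Arribas--Tapiola \cite{MPT}: \eqref{eq.CNdual} is the bilinear Carleson estimate, \eqref{eq.NpC} its converse, and \eqref{eq.NpC2} the dual converse. The only genuine change is the bookkeeping of homogeneous dimensions: $\pom$ carries parabolic ``dimension'' $n+1$, so $\sigma(\Delta_r(q,\tau))\approx r^{n+1}$, while $\om$ carries ``dimension'' $n+2$, so $|Q_r(X,t)|\approx r^{n+2}\approx r\,\sigma(\Delta_r(X,t))$ --- precisely what the weights $\delta^{-(n+2)}$ in $\wt\A$ and $\delta^{-1}$ in $\wt\C$ absorb. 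Throughout I fix a parabolic Whitney decomposition $\om=\bigcup_j W_j$ with $\ell(W_j)\approx\dist_p(W_j,\pom)$, use that for $(X,t)\in W_j$ the $L^2$ average over $Q_{c\delta(X,t)/2}(X,t)$ is comparable to that over a fixed dilate of $W_j$, and let apertures $a$ and Whitney parameters $c$ float between lines, which is legitimate by Lemma~\ref{lemNCA}.

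For \eqref{eq.CNdual}: Cauchy--Schwarz on each $W_j$, together with $|W_j|\approx\delta^{n+2}$ and $\int_{W_j}\delta^{-1}\,dXdt\approx\delta^{n+1}\approx\sigma(\Delta_{\ell(W_j)})$, gives $\iint_\om|uF|\,dXdt\lesssim\iint_\om\big(\fiint_{Q_{c\delta/2}(X,t)}|u|^2\big)^{1/2}\big(\fiint_{Q_{c\delta/2}(X,t)}|\delta F|^2\big)^{1/2}\,\delta(X,t)^{-1}\,dXdt$. For $p\in(1,\infty)$ I then insert $1\approx\sigma(\mathcal S(X,t))^{-1}\int_{\mathcal S(X,t)}d\sigma$, where $\mathcal S(X,t):=\{(q,\tau)\in\pom:(X,t)\in\Gamma_a(q,\tau)\}=\Delta_{(1+a)\delta(X,t)}(X,t)$ has $\sigma(\mathcal S(X,t))\approx\delta(X,t)^{n+1}$, and Fubini (using $\1_{\mathcal S(X,t)}(q,\tau)=\1_{\Gamma_a(q,\tau)}(X,t)$) rewrites the right side as $\int_\pom\big(\iint_{\Gamma_a(q,\tau)}\big(\fiint_{Q_{c\delta/2}}|u|^2\big)^{1/2}\big(\fiint_{Q_{c\delta/2}}|\delta F|^2\big)^{1/2}\,\delta^{-(n+2)}\,dXdt\big)\,d\sigma(q,\tau)$; majorizing the $u$-average by $\wt N^{c}_{a,2}(u)(q,\tau)$, pulling it out, and H\"older in $(q,\tau)$ produce $\|\wt N^{c}_{a,2}(u)\|_{L^p(\pom)}\,\|\wt\A^{a,c}_{1,2}(\delta F)\|_{L^{p'}(\pom)}$, which by Lemma~\ref{lemNCA} and \eqref{eq.C=A} is $\approx\|\wt N(u)\|_{L^p}\,\|\wt\C(\delta F)\|_{L^{p'}}$. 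The endpoint $p=1$ must avoid the area functional, whose $L^\infty$ norm is typically infinite; there one applies the ($L^2$-averaged) Carleson embedding theorem directly to the measure $d\mu(X,t)=\big(\fiint_{Q_{c\delta/2}(X,t)}|\delta F|^2\big)^{1/2}\,\delta(X,t)^{-1}\,dXdt$, whose Carleson norm is $\|\wt\C(\delta F)\|_{L^\infty}$, obtaining $\iint_\om|uF|\lesssim\iint_\om\big(\fiint_{Q_{c\delta/2}}|u|^2\big)^{1/2}\,d\mu\lesssim\|\wt\C(\delta F)\|_{L^\infty}\|\wt N(u)\|_{L^1}$.

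For the converse \eqref{eq.NpC} I linearize: take $g\ge0$ in $L^{p'}(\pom)$ with $\|g\|_{p'}=1$ and $\int_\pom\wt N^{c}_{a,2}(u)\,g\,d\sigma\ge\tfrac12\|\wt N^{c}_{a,2}(u)\|_{L^p}$, and a measurable selection $(q,\tau)\mapsto Z(q,\tau)\in\Gamma_a(q,\tau)$ with $\big(\fiint_{Q_{c\delta(Z)/2}(Z)}|u|^2\big)^{1/2}\ge\tfrac12\wt N^{c}_{a,2}(u)(q,\tau)$; then one forms $F(Y,s):=\int_\pom g(q,\tau)\,\frac{\overline{u(Y,s)}\,\1_{Q_{c\delta(Z)/2}(Z(q,\tau))}(Y,s)}{|Q_{c\delta(Z)/2}(Z(q,\tau))|\,(\fiint_{Q_{c\delta(Z)/2}(Z)}|u|^2)^{1/2}}\,d\sigma(q,\tau)$, exactly as in \cite{HR13} and \cite[Proposition~2.4]{MPT}, so that by Fubini $\iint_\om uF\,dXdt\gtrsim\int_\pom g\,\wt N^{c}_{a,2}(u)\,d\sigma\gtrsim\|\wt N^{c}_{a,2}(u)\|_{L^p}\approx\|\wt N(u)\|_{L^p}$, while the bounded overlap of the supporting Whitney regions (each with boundary shadow contained in a fixed dilate of $\Delta_{\delta}(q,\tau)$) together with the doubling of $\sigma$ yields $\|\wt\C(\delta F)\|_{L^{p'}}\lesssim\|g\|_{p'}=1$; a final rescaling of $F$ gives \eqref{eq.NpC}. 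Statement \eqref{eq.NpC2} follows from the same scheme with the roles of the non-tangential maximal function and the Carleson functional (equivalently, by \eqref{eq.C=A}, the area functional) interchanged --- the more classical direction --- where the test function is now supported in the near-extremal Carleson regions for $\wt\C(u)$ and normalized through $\|\wt N(\delta F)\|_{L^p}$; this, too, is carried over from \cite{HR13}, \cite{MPT} without change.

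Essentially everything is routine once the parabolic scaling $t\sim|X|^2$ is respected --- Whitney cubes and cones are built with $d_p$, the Vitali/Whitney covering lemmas and the doubling of $\sigma$ on $\pom$ are metric facts blind to the anisotropy, and the extremal-function constructions are combinatorial. The steps I would watch most carefully are the dimension count in the measure-insertion step (the identity $|Q_r|\approx r\,\sigma(\Delta_r)$, i.e.\ interior dimension $n+2$ versus boundary dimension $n+1$) and, at the endpoint $p=1$ of \eqref{eq.NpC}, the fact that this is a Hardy-space/Carleson-measure duality requiring the atomic decomposition argument of the cited references rather than plain H\"older; I note that an inequality $\|\wt\C(\delta F)\|_{L^{p'}}\lesssim1$ (rather than an exact equality) already suffices for the stated bounds, so the final rescaling is harmless.
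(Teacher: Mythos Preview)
Your proposal is correct and is in fact more detailed than what the paper offers: the paper does not prove this lemma at all but simply states that the results are obtained in \cite{HR13} (upper half-space) and \cite[Proposition~2.4]{MPT} (more general domains) in the isotropic setting, and that ``the argument carries over to the parabolic settings.'' Your write-up is precisely an explication of how that carry-over goes --- same references, same linearization/selection construction, same Carleson embedding for the endpoint --- with the correct parabolic dimension bookkeeping ($n+1$ on $\pom$, $n+2$ in $\om$) made explicit.

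One minor point worth flagging: the paper itself notes that \cite{HR13} and \cite{MPT} treat the duality only for $p>1$, yet the lemma is stated for $p\in[1,\infty)$. You correctly identify that the endpoint $p=1$ of \eqref{eq.CNdual} needs the Carleson embedding rather than the area-functional route, and that \eqref{eq.NpC} at $p=1$ is a Hardy-space statement requiring the atomic argument; the paper glosses over this entirely. So on this point your sketch is actually more careful than the source.
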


\subsection{Reinforced weak and Energy solutions}\label{RwEs}

We recall the paper \cite{AEN} that neatly presents the concept of reinforced weak solutions for the parabolic problem of interest here. In \cite{Din23}, it was shown that the definition given in \cite{AEN} can be weakened a bit further by only asking for the \lq\lq local $1/2$ derivative" in the time variable. We explain below.
\vglue1mm

If $\mathcal O$ is an open subset of $\mathbb R^{n} $, we let $\H^1(\mathcal O)=\W^{1,2}(\mathcal O)$ be the standard Sobolev space of real valued functions $v$ defined on $\mathcal O$, such that $v$ and $\nabla v$ are in $\L^{2}(\mathcal O;\R)$ and $\L^{2}(\mathcal O;\R^n)$, respectively. A subscripted `$\loc$' will indicate that these conditions hold locally.

We shall say that $u$ is a \emph{reinforced weak solution} of $-\partial_t u + \divg(A\nabla u)=0$ on $\Omega=\mathcal O\times \R$ if
\begin{align*}
 u\in \dot {\E}_{\loc}(\Omega):= \H_{\loc}^{1/2}(\R; \L^2_{\loc}(\mathcal O)) \cap \Lloc^2(\R; \W^{1,2}_{\loc}(\mathcal O))
\end{align*}
and if for all $\phi,\psi \in \C_0^\infty(\Omega)$,
\begin{equation}\label{2.10}
\iint_{\Omega}\left[
 A\nabla u\cdot{\nabla (\phi\psi)}+ \HT\dhalf (u\psi)\cdot {\dhalf \phi}+ \HT\dhalf (u\phi)\cdot {\dhalf \psi}\right]\, \d X \d t=0.
 \end{equation}
Here, $\dhalf$ is the half-order derivative and $\HT$ is the Hilbert transform with respect to the $t$ variable, normalized so that $\partial_{t}= \dhalf \HT \dhalf$. The space $\Hdot^{1/2}(\R)$ is the homogeneous Sobolev space of order 1/2 - the completion of $\C_0^\infty(\R)$ in the norm $\|\dhalf (\cdot)\|_{2}$ -  and it embeds into the space $\mS'(\R)/\IC$ of tempered distributions, modulo constants.
The local space  $\H_{\loc}^{1/2}(\R)$ consists of functions $u$ such that $u\phi\in \Hdot^{1/2}(\R)$ for all $\phi\in \C_0^\infty(\Omega)$. As shown in \cite{Din23} the space $\H_{\loc}^{1/2}(\R)$ is larger than $\Hdot^{1/2}(\R)$.
For  $u\in\Hdot^{1/2}(\R; \L^2_{\loc}(\mathcal O)), $ \eqref{2.10} simplifies to 
\begin{equation}\nonumber
\iint_{\Omega}\left[
 A\nabla u\cdot{\nabla \phi}+ \HT\dhalf u\cdot {\dhalf \phi}\right]\, \d X \d t=0.
 \end{equation}
  
 Our definition has the advantage that, in taking a cut-off of the function $u$,  we might potentially improve the decay of $D^{1/2}_tu$ at infinity. 

At this point we remark that for any $u\in \Hdot^{1/2}(\R)$ and $\phi,\psi\in \C_0^\infty(\R)$ the formula
\begin{align*}
 \int_{\R} \left[\HT\dhalf (u\psi)\cdot {\dhalf\phi}+\HT\dhalf (u\phi)\cdot {\dhalf\psi}\right]\d t = - \int_{\R} u \cdot {\partial_{t}(\phi\psi)} \d t
\end{align*}
holds, where on the right-hand side we use the duality form extension of the complex inner product of $\L^2(\R)$, between $\Hdot^{1/2}(\R)$ and its dual $\Hdot^{-1/2}(\R)$. By taking $\psi=1$ on the set where $\phi$ is supported, it follows that a reinforced weak solution is a weak solution in the usual sense on $\Omega$ since it satisfies
$u\in \Lloc^2(\R; \W^{1,2}_{\loc}(\mathcal O))$ and for all $\phi\in \C_0^\infty(\Omega)$,
\begin{align}\label{eq.weaksol}
 \iint_{\Omega} A\nabla u\cdot{\nabla \phi} \d X \d t - \iint_{\Omega} u \cdot {\partial_{t}\phi} \d X  \d t=0.
\end{align}
 This implies $\pd_{t}u\in \Lloc^2(\R; \W^{-1,2}_{\loc}(\mathcal O))$. Conversely, any  weak solution $u$ in    $ \H_{\loc}^{1/2}(\R; \L^2_{\loc}(\mathcal O))$ is a reinforced weak solution.\vglue2mm

Specialising to the case $\Omega=\mathcal O\times\mathbb R$, where $\mathcal O$ is either a bounded or an unbounded Lipschitz domain,
 we say that a reinforced weak solution $v\in \dot{\E}_{\loc}(\mathcal O\times\mathbb R)$ belongs to the \emph{energy class} $\dot \E(\mathcal O\times\mathbb R)$ if
\begin{align*}
 \|v\|_{\dot \E} := \bigg(\|\nabla v\|_{\L^2(\mathcal O\times\mathbb R)}^2 + \|\HT \dhalf v\|_{\L^2(\mathcal O\times\mathbb R)}^2 \bigg)^{1/2} < \infty.
\end{align*}
Consequently, these are called \emph{energy solutions}. When considered modulo constants, $\dot \E $ is a Hilbert space and it is in fact the closure of $\C_0^\infty\!\big(\,\cl{\mathcal O\times\mathbb R}\,\big)$ for the homogeneous norm $\|\cdot\|_{\dot \E}$
and it coincides with the space $\dot{L}^2_{1,1/2}(\Omega)$.

As shown in \cite{AEN} (with a small generalization), functions from $\dot \E $ have well defined Dirichlet traces with values in
 the \emph{homogeneous parabolic Sobolev space} $\Hdot^{1/4}_{\pd_{t} - \Delta_x}(\partial\mathcal O\times\mathbb R)$. Here, $\Hdot^{s}_{\pm \pd_{t} - \Delta_x}(\R^n)$ is defined as the closure of Schwartz functions $v \in \mS(\ree)$ with Fourier support away from the origin in the norm $\|\cF^{-1}((|\xi|^2 \pm i \tau)^s \cF v)\|_2$. This yields a space of tempered  distributions modulo constants in $\Lloc^2(\ree)$ if $0 < s \leq 1/2$. 
 Conversely, any $g \in \Hdot^{1/4}_{\pd_{t} - \Delta_x}$ can be extended to a function $v \in  \dot \E$ with trace $v\big|_{\partial\mathbb R^{n+1}_+} = g$.  From this via partition of unity we can define 
 $\Hdot^{1/4}_{\pd_{t} - \Delta_x}(\partial\mathcal O\times\mathbb R)$ for $\mathcal O$ Lipschitz.
  
Hence, by the energy solution to $-\partial_tu + \divg(A\nabla u)=0$ with Dirichlet boundary datum $u\big|_{\partial\mathcal O\times\R} = f \in \Hdot^{1/4}_{\pd_{t} - \Delta_x}$ (understood in the trace sense) we mean $u \in \dot\E$ such that
\begin{align*}
a(u,v):= \iint_{\mathcal O\times\R} \left[A \nabla u \cdot{\nabla v} + \HT \dhalf u \cdot {\dhalf v}\right] \d X \d t = 0,
\end{align*}
holds for all $v \in \dot \E_0$, the subspace of $\dot \E$ with zero boundary trace.

Moving onto the Neumann problem, given any $u \in \dot \E(\Omega)$, the co-normal derivative $\partial^A_\nu u\Big|_{\partial\Omega}=:\langle A\nabla u,\nu\rangle \Big|_{\partial\Omega}=g$ is defined via the formula
\begin{align}\label{eq.NeumannBdy}
 \iint_{\mathcal O\times\R} \left[A \nabla u \cdot{\nabla v} + \HT \dhalf u \cdot {\dhalf v}\right] \d X \d t -\int_{\partial\mathcal O
 \times\R}gv \d x\d t= 0,
\end{align}
for all $v \in \dot \E$. Here, since the traces of $v$ belong to $\Hdot^{1/4}_{\pd_{t} - \Delta_x}(\partial\Omega)$ and all elements of the space $\Hdot^{1/4}_{\pd_{t} - \Delta_x}$ are realized by some $v \in \dot \E$,
 the Neumann boundary data must by duality  naturally belong to the space $\Hdot^{-1/4}_{\pd_{t} - \Delta_x}(\partial\Omega)$.

By \cite{AEN}, the key to solving these problems is the introduction of the modified sesquilinear form (introduced earlier in \cite{Ny2016}):
\begin{equation}\label{eq-sesq}
 a_\delta(u,v) := \iint_{\mathcal O\times\R} \left[A \nabla u \cdot {\nabla (1-\delta \HT) v} + \HT \dhalf u \cdot {\dhalf (1-\delta \HT) v}\right] \d X \d t,
\end{equation}
where $\delta$ is a  real number yet to be chosen. The Hilbert transform $\HT$ is a skew-symmetric isometric operator with inverse $-\HT$ on both $\dot \E$ and $\Hdot^{1/4}_{\pd_{t} - \Delta_x}$.  Hence, $1-\delta \HT$ is invertible on these spaces for any $\delta \in \R$. Hence for a fixed $\delta>0$  small enough, $a_\delta$ is coercive on $\dot \E$ since
\begin{equation}\label{eq:coer}
 a_\delta(u,u) \ge (\lambda-\Lambda\delta )\|\nabla u\|_2^2 + \delta \|\HT \dhalf u \|_2^2.
\end{equation} 
In particular $\delta=\lambda/(\Lambda+1)$ would work.
To solve the Dirichlet problem we take an extension $w \in \dot \E$ of the data $f$ and apply the Lax-Milgram lemma to $a_\delta$ on $\dot \E_0$ to obtain some $u \in \dot \E_0$ such that 
\begin{align*}
 a_\delta(u,v) = - a_{\delta}(w,v) \qquad (v \in \dot \E_0).          \end{align*}
Hence, $u + w$ is an energy solution with data $f$. Should there exist another solution $v$, then $a_\delta(u+w-v,u+w-v) = 0$ and hence by coercivity $\|u +w - v \|_{\dot \E} = 0$. Thus the two solutions only differ by a constant. Thus the  Dirichlet problem associated with our parabolic PDE is  \emph{well-posed} in the energy class. Similar arguments allow us to solve the Neumann problem by considering for the datum $g \in \Hdot^{-1/4}_{\pd_{t} - \Delta_x}(\partial\Omega)$ the solution $u
\in \dot{E}$ such that 
\begin{align*}
 a_\delta(u,v) = \langle g, \mbox{Tr } (1-\delta \HT)v\rangle\qquad (v \in \dot \E).          \end{align*}

\begin{definition}[\Dq]\label{def.Dq}\hypertarget{Dq}{}
    Let $p\in(1,\infty)$. We say that the $L^{p'}$ Dirichlet problem is solvable for $L^*$, denoted by \Dq or \Dq$^{L^*}$, if there exists a constant $C>0$ such that for all $g \in \Hdot^{1/4}_{\pd_{t} - \Delta_x}\cap L^p(\pom)$, the energy solution $u\in \dot E(\om)$ to $L^* u=\partial_tu+\divg(A^T\nabla u) = 0$ in $\om$ with trace $u|_{\pom}=g$ satisfies the estimate
    \[
    \|N(u)\|_{L^p(\pom)}\le C\|g\|_{L^p(\pom)}.
    \]
\end{definition}
\begin{definition}[\Np]\label{def.Np}\hypertarget{Np}{}
     Let $p\in(1,\infty)$. We say that the $L^{p}$ Neumann problem is solvable for $L$, denoted by \Np or \Np$^{L}$, if there exists a constant $C>0$ such that for all $g \in \Hdot^{-1/4}_{\pd_{t} - \Delta_x}(\partial\Omega)\cap L^p(\pom)$, the energy solution $u\in \dot E(\om)$ to $Lu = 0$ in $\om$ with $\dr_\nu^Au|_{\pom}=g$ (defined as in \eqref{eq.NeumannBdy}) satisfies the estimate
    \[
    \|\wt N(u)\|_{L^p(\pom)}\le C\|g\|_{L^p(\pom)}.
    \]
\end{definition}
 
 \medskip

\noindent{\bf Energy solutions for the Poisson problem.}\label{ESPP} We now address the existence of energy solutions for the inhomogeneous PDE
$$Lu=-\dr_t u +\divg (A \nabla u)= h-\divg F,$$
with zero Dirichlet or Neumann data. 

To avoid any boundary terms arising from the pairing of the right-hand side with a test function we shall assume that $h$ is a bounded function with bounded support and $F$ is a bounded function with compact support in $\Omega$.

Let $v\in \dot \E$  be a test function. Then the pairing $\langle -\divg F,v\rangle$ can be understood in the sense
$$\langle -\divg F,v\rangle=\iint_{\mathcal O\times\R} F\cdot\nabla v\,dXdt,$$
and since $\nabla v\in L^2(\Omega)$ we need $\|F\|_{L^2(\Omega)}<\infty$, which certainly any bounded compactly supported $F$ satisfies. The above defines a bounded functional on $\dot \E_0$ for the Dirichlet problem and on $\dot \E$ for the Neumann problem and thus we can solve both problems with a right-hand side of the form $\divg F$.\medskip

Considering the right-hand side for the term $h$ for the Dirichlet problem is also not problematic but we will need to understand the embedding of the space $\dot\E_0$ into the $L^p$ spaces. It is stated in \cite{AEN} Remark
3.13 (without proof) that the space $\dot\E(\R^{n+1})/\mathbb C$ has realization in the Lebesgue space $L^q(\R^{n+1})$ for $q=2(n+2)/n$. From this (using an odd extension across the flat boundary) we get that $\dot\E_0(\R^n_+\times\R)$
embeds into $L^q(\R^n_+\times\R)$. Using a partition of unity, the embedding of $\dot\E_0(\mathcal O\times\R)$
into $L^q(\mathcal O\times\R)$ follows. Hence it remains to see why the original claim as stated 
in \cite{AEN} holds. Taking the same approach as in Tao's notes \cite[Exercise 41]{T} it suffices to understand
into which $L^q(\R^{n+1})$ belongs the convolution of an $L^2$ function with the kernel
$$\hat{f}(\xi,\tau)=C_n\rho(\xi,\tau)^{-(n+1)},$$
where $ \|(\xi,\tau)\| = \rho$, for $\xi\in\mathbb R^n$ and $\tau\in\mathbb R$
and $\rho$ is the positive solution  to the following equation:
$$   \frac{|\xi|^2}{\rho^2} + \frac{\tau^2}{\rho^4} = 1.$$ 
It holds that $\rho\sim |\xi|+ |\tau|^{1/2}$, and that $f$ is the function $\rho(x,t)^{-1}=\|(x,t)\|^{-1}$. Observe that the $\dot\E(\R^{n+1})$ norm can be defined
by
	\begin{equation*}
		\|g\|_{\dot\E(\R^{n+1})} = \|\D g\|_{L^2(\R^{n+1})}=\left\| \|(\xi,\tau)\|\hat g(\xi,\tau)\right\|_{L^2(\R^{n+1})},
	\end{equation*}
    where 
    \begin{equation*}
		(\D f)\,\widehat{\,}\,(\xi,\tau) := \|(\xi,\tau)\| \widehat{f}(\xi,\tau).
	\end{equation*}
This then implies that $g=h*\hat f$ for some $L^2$ function $h$. 	
 Analysis of the kernel $\hat f$ above reveals that it scales in $\xi$ as $|\xi|^{-(n+1)}$ but in $\tau$ as $|\tau|^{-(n+1)/2}$. The final conclusion
requires understanding of fractional integrals with kernels of mixed homogeneity (see Chapter 1 for the parabolic norm structure and Theorem 6.1 for the fractional integration result of \cite{FS}). This reveals that the resulting convolution belongs to the strong $L^q(\R^{n+1})$ space for $q=2(n+2)/n$, as desired.\medskip

As $h$ is bounded with  bounded
support, it follows that 
 the pairing $\langle h,v\rangle$ enjoys the bound
$$|\langle h,v\rangle|=\left|\iint_{\mathcal O\times\R} hv\,dXdt\right|\lesssim \|h\|_{L^{q'}}\|v\|_{\dot\E_0},$$
and therefore defines a bounded linear functional on $\dot \E_0$. This again  implies the existence of a solution for the Poisson Dirichlet problem with right-hand side containing the term $h$. We do not consider the Neumann problem for the term $h$.\medskip

\noindent{\bf More on weak solutions.}\label{STestF}
Let $u$ be a weak solution to $Lu=0$ in $\om$, that is, $u\in \L_{\loc}^2(\R; \W^{1,2}_{\loc}(\mathcal O))$, 
$\pd_{t}u\in \Lloc^2(\R; \W^{-1,2}_{\loc}(\mathcal O))$, and \eqref{eq.weaksol} holds for all $\phi\in C_c^\infty(\om)$. We can rewrite 
\eqref{eq.weaksol} as
\begin{equation}\label{1.3}
 \iint_{\Omega} A\nabla u\cdot{\nabla \phi} \d X \d t + \int_{\R}\langle\partial_t u, \phi\rangle  \d t=0,
\end{equation}
where the pairing $\langle \partial_t u, \phi\rangle $ is understood as pairing of $\W^{-1,2}_{\loc}(\mathcal O)$
with $\W^{1,2}_{0}(\mathcal O)$ function subsequently integrated in time. We refer to solutions $u$ such that 
$u\in \L_{\loc}^2((t_0,t_1); \W^{1,2}_{\loc}(\mathcal O))$ and 
$\pd_{t}u\in \Lloc^2((t_0,t_1); \W^{-1,2}_{\loc}(\mathcal O))$ when the support of $\phi$
is limited to the set $\mathcal O\times [t_0,t_1]$.

 Let $\theta_\varepsilon:\R\to\R$ be a family of standard mollifiers, that is, $\theta_\varepsilon(\tau)=\frac{1}{\varepsilon}\theta(\tau/\varepsilon)$ for $\tau\in\R$, $\varepsilon>0$, where $\theta\in C_c^\infty(-1,1)$, $\int_\R \theta=1$, and $0\le \theta\le 1$. 
Let $\psi\in C_c^\infty(\mathcal O)$ be a fixed function. For a fixed $t\in(t_0,t_1)$, we consider the smooth function  $\phi_\varepsilon(X,\tau):=\psi(X)\theta_\varepsilon(t-\tau)$. Then for a weak solution $u$ on $\OO\times (t_0,t_1)$,  thanks to \eqref{1.3}
we have that
\begin{equation}\label{1.4}
 \iint_{\Omega}A(X,\tau)\nabla u(X,\tau)\cdot{\nabla \psi(X)}\,\theta_\varepsilon(t-\tau) \d X \d \tau 
 + \int_\R\langle \partial_\tau u(\cdot,\tau), \psi\rangle\, \theta_\varepsilon(t-\tau)  \d \tau=0.
\end{equation}
As $\varepsilon\to  0_+$, the first term of \eqref{1.4} converges to 
$$
\int_{\mathcal O}  A(X,t)\nabla u(X,t)\cdot{\nabla \psi(X)}\d X \qquad\text{for a.e. }t\in (t_0,t_1).
$$
For the second term, since $\dr_t u\in  \Lloc^2((t_0,t_1); \W^{-1,2}_{\loc}(\mathcal O))$, the function $\tau\mapsto\langle \partial_\tau u(\cdot,\tau), \psi\rangle$ is in $L_{\loc}^2(t_0,t_1)$, and therefore, for a.e. $t\in(t_0,t_1)$,
$$
\int_\R\langle \partial_\tau u(\cdot,\tau), \psi\rangle\, \theta_\varepsilon(t-\tau) \d \tau
\to
\langle \partial_\tau u(\cdot,t), \psi\rangle \qquad\text{as }\varepsilon\to 0_+.
$$
Hence for a.e. $t\in (t_0,t_1)$, we have 
\begin{equation}\label{1.5}
\langle \partial_\tau u(\cdot,t), \psi\rangle
+\int_{\mathcal O}  A(X,t)\nabla u(X,t)\cdot{\nabla \psi(X)}\d X=0 \qquad\text{for any }\psi\in C_c^\infty(\mathcal O),
\end{equation}
and equivalently (by a density argument), for any $\psi\in \W_0^{1,2}(\mathcal O)$.
In particular, it implies that if $u$ is a weak solution to $Lu=0$ in $\OO\times(t_0,t_1)$, then 
\begin{equation}\label{eq.wksol_cutoff}
\int_{t_0}^{t_1}\langle \partial_\tau u(\cdot,t), \phi(\cdot,t)\rangle\,dt +\int_{t_0}^{t_1}\int_{\mathcal O}  A\nabla u\cdot{\nabla \phi}\d Xdt=0\quad\text{for any }\phi\in L_{\loc}^2((t_0,t_1);\W_0^{1,2}(\OO)),
\end{equation}
since we can take $\psi=\phi(\cdot,t)$ in \eqref{1.5} and integrate in $t$ over $(t_0,t_1)$.
The observations \eqref{1.5} and \eqref{eq.wksol_cutoff} will be very useful since they allow us to take sharp cut-offs in time.

We remark that similar observations also apply to weak solutions (and therefore, energy solutions) to the Poisson problem $Lu=-\div F$ with zero Dirichlet and Neumann boundary data; we skip the details.


\medskip

\subsection{The $L^p$ Poisson problems}
Let $L=-\dr_t +\divg (A\nabla \cdot)$, and $L^*=\dr_t+\divg(A^T\nabla\cdot)$ be the adjoint operator of $L$. 
\begin{definition}[\PDq,\PRp]\hypertarget{PDq}{}
    Let $p\in(1,\infty)$. We say that the $L^{p'}$ Poisson-Dirichlet problem is solvable for $L^*$, denoted by \PDq or \PDq$^{L^*}$, if there exists a constant $C>0$ such that for all $F\in L_c^\infty(\om,\Rn)$, the energy solution $u$ to $L^* u=\divg F$ in $\om$ with $u=0$ on $\pom$ satisfies
    \[
    \| N(u)\|_{L^{p'}(\pom)}\le C\|\wt\C(\delta F)\|_{L^{p'}(\pom)}.
    \]
    We say that the $L^{p}$ Poisson-Regularity problem is solvable for $L$, denoted by \PRp or \PRp$^{L}$, \hypertarget{PRp}{}if there exists a constant $C>0$ such that for all $h\in L_c^\infty(\om)$, the energy solution $u$ to $L u=h+\divg F$ in $\om$ with $u=0$ on $\pom$ satisfies
    \[
    \| \wt N(\nabla u)\|_{L^{p}(\pom)}\le C\br{\| \wt \C(\delta h)\|_{L^{p}(\pom)}+\|\wt \C(F)\|_{L^p(\pom)}}.
    \]
\end{definition}

\begin{definition}\label{def.PN} \hypertarget{PNq}{}
Let $p\in (1,\infty)$. 
We say that the weak $L^{p'}$ Poisson-Neumann problem\footnote{We follow the terminology introduced in \cite{FL} and therefore refer to these problems as {\it weak}. Since the stronger versions are not needed for the purposes of this paper, we do not define them.} \wPNq\ is solvable for $L^*$ if for any ${F}\in L^\infty_c(\Omega,\R^n)$, the energy solution $u$ to $L^*u=-\divg F$ with zero Neumann boundary data satisfies 
\[
\|\wt N(\delta \nabla u)\|_{L^{p'}(\partial \Omega)} \leq C  \|\wt\C(\delta{F})\|_{L^{p'}(\partial \Omega)},
\]
where $C$ is independent of $F$. 
We write 
\wPNq$_{L^*}$ when we want to mention the operator.
\end{definition}

\begin{definition} \hypertarget{wPRNp}{}
Let $p\in (1,\infty)$. 
We say that the weak $L^p$ Poisson-Neumann-Regularity \wPRNp\ is solvable for $L$ if for any ${F}\in L^\infty_c(\Omega,\R^n)$, the solution $u$ to $Lu=-\divg F$ with 0 Neumann boundary data satisfies
\[
\|\wt N(\nabla u) \|_{L^{p}(\partial \Omega)} \leq C  \|\wt\C({F})\|_{L^{p}(\partial \Omega)},
\]
where $C$ is independent of $F$.
We write 
\wPRNp$_L$ when we want to mention the operator.
\end{definition}

\section{Basic estimates}
We assume $\om=\mathcal O\times\R$ where $\mathcal O$ is a Lipschitz domain. 

\begin{lemma}[Caccioppoli's inequality]\label{L:Caccio}
Let $Q_0\subset\R^{n+1}$ be a cube satisfying $Q_0\cap\om\neq\emptyset$.  Let $u$ be a weak solution of $Lu=0$ in $Q_0\cap\om$. If $Q_0\cap\pom\neq\emptyset$, then we assume in addition that $u$ has zero Neumann data on ${Q_0}\cap\pom$.

Let $(\bar X,\bar t)\in Q_0\cap\overline{\om}$ and let $J_r:=J_r(\bar X,\bar t)$ be a backward in time parabolic cube that satisfies $J_{4r}\subset Q_0$. 
Then there exists a constant $C=C(\lambda,\Lambda, n)$ such that for any constant $c$,
	\[
	    \iint_{J_{r}\cap\om} |\nabla u|^{2} dYds 
			\leq \frac{C}{r^2} \iint_{J_{2r}\cap\om} |u(Y,s)-c|^{2} dYds.
	\]
\end{lemma}
\begin{proof}
    When $Q_0\subset\om$, it is the standard interior Caccioppoli inequality whose proof is classical. 
    When $Q_0\cap\pom\neq\emptyset$, we assume without loss of generality that it is centered at $\pom$.
    Assume that $\mathcal O\cap Q_0=\set{(x,x_n)\in\Rn: x_n>\vp(x)}\cap Q_0$ for some Lipschitz function $\vp$, and consider the even reflection of $u$ defined as 
    \begin{equation}\label{eqdef.reflec}
    \tilde u(x,x_n,t)=\begin{cases}
u(x,x_n,t),&\qquad\mbox{for }x_n\ge \varphi(x),\\
u(x,2\varphi(x)-x_n,t),&\qquad\mbox{for }x_n< \varphi(x).\end{cases}   
   \end{equation}
   Since $u$ has zero Neumann data on $Q_0\cap\pom$, $\tilde u$ is a weak solution to $\tilde L\tilde u=-\dr_t \tilde u+\divg(\tilde A\nabla \tilde u)=0$ in $Q_0$, where $\tilde A=A$ on $\Omega$ and the coefficient matrix $\tilde A$ on the set $x_n<\varphi(x)$ depends only on $A(x,2\varphi(x)-x_n,t)$ and $\nabla\varphi$ making the resulting matrix bounded and uniformly elliptic on the set $Q_0\setminus\om$. Applying the interior Caccioppoli's inequality to $\tilde u$ gives
   \[
   \iint_{J_{r}}|\nabla\tilde u|^{2} dYds 
			\leq \frac{C}{r^2} \iint_{J_{2r}} |\tilde u(Y,s)-c|^{2} dYds,
   \]
   which yields the desired estimate since
   \[
   \iint_{J_{r}\cap\om}|\nabla u|^2dYds\lesssim \iint_{J_r}|\nabla\tilde u|^2dYds, \text{ and }
    \iint_{J_{2r}} |\tilde u(Y,s)-c|^{2} dYds\lesssim
     \iint_{J_{2r}\cap\om} |u(Y,s)-c|^{2} dYds.
   \]
\end{proof}

\begin{lemma}\label{lem.MoserNeu}
Let $Q_0$, $J_r$, and $u$ be as in Lemma~\ref{L:Caccio}.
    There exists a constant $C=C(\lambda,\Lambda,n)$ such that
    \begin{equation}\label{eq.localbdd}
          \sup_{J_{r/2}\cap\om}|u| \le \frac{C}{r^{n+2}}\iint_{J_{2r}\cap\om}|u(Y,s)|dYds.
    \end{equation}
  \end{lemma}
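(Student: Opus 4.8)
\emph{Proof plan.} The strategy is to deduce \eqref{eq.localbdd} from the classical interior parabolic local boundedness (De Giorgi--Nash--Moser) estimate, using the even reflection across the Lipschitz boundary already employed in the proof of Lemma~\ref{L:Caccio}. When $4Q=Q_{4r}(X,t)\subset\om$, the bound \eqref{eq.localbdd} is exactly Moser's local boundedness estimate for uniformly parabolic equations with bounded measurable (here time-dependent) coefficients: applying it to the nonnegative subsolutions $u^{+}$ and $u^{-}$ on the nested cubes $Q_{r/2}\subset Q_{2r}$ gives $\sup_{Q_{r/2}(X,t)\cap\om}|u|\lesssim r^{-(n+2)}\iint_{Q_{2r}(X,t)\cap\om}|u|\,dYds$, where one uses the standard fact that the $L^{p}$-averages of a nonnegative subsolution over concentric parabolic cubes are mutually comparable for all $p>0$, so that the $L^{1}$-average may be taken on the right-hand side. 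I would simply cite this classical interior result.

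For the case $(X,t)\in\pom$, write $\mathcal O\cap 4Q=\{(x,x_n):x_n>\vp(x)\}\cap 4Q$ for a Lipschitz function $\vp$ and let $\tilde u$ be the even reflection of $u$ defined in \eqref{eqdef.reflec}. As shown in the proof of Lemma~\ref{L:Caccio}, since $u$ has zero Neumann data on $4Q\cap\pom$, $\tilde u$ is a weak solution of $\tilde L\tilde u=-\dr_t\tilde u+\divg(\tilde A\nabla\tilde u)=0$ on the full cube $4Q$, with $\tilde A$ bounded and uniformly elliptic on $4Q$ (its constants depending only on $\lambda,\Lambda,n$ and $\|\nabla\vp\|_{\infty}$). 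Applying the interior estimate from the first part to $\tilde u$ on $4Q$ yields $\sup_{Q_{r/2}(X,t)}|\tilde u|\lesssim r^{-(n+2)}\iint_{Q_{2r}(X,t)}|\tilde u(Y,s)|\,dYds$. I then transfer this back to $u$: trivially $\sup_{Q_{r/2}(X,t)\cap\om}|u|\le\sup_{Q_{r/2}(X,t)}|\tilde u|$, and because the reflection map $(x,x_n,t)\mapsto(x,2\vp(x)-x_n,t)$ is bi-Lipschitz with Jacobian bounded above and below, $\iint_{Q_{2r}(X,t)}|\tilde u|\,dYds\lesssim\iint_{Q_{2r}(X,t)\cap\om}|u|\,dYds$. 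Combining the last three displays gives \eqref{eq.localbdd}.

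The only genuine ingredient is the classical interior parabolic Moser estimate for equations with bounded measurable, time-dependent coefficients, so I do not anticipate a real obstacle. The point requiring (minor) care is the same one handled in Lemma~\ref{L:Caccio}: that the reflected matrix $\tilde A$ remains bounded and uniformly elliptic on all of $4Q$, and that the vanishing of the conormal derivative on $4Q\cap\pom$ is precisely what makes $\tilde u$ a weak solution of $\tilde L\tilde u=0$ across the boundary with no residual boundary term.
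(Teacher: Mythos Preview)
Your proposal is correct and follows essentially the same route as the paper: the interior case is the classical parabolic Moser estimate (the paper cites Aronson and notes the standard passage from an $L^2$ to an $L^1$ right-hand side via a dilation argument), and the boundary case is reduced to the interior one by the even reflection \eqref{eqdef.reflec}, exactly as you describe. Your write-up actually supplies a bit more detail than the paper, which simply says ``we skip the details'' for the boundary case.
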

\begin{proof}
   When $Q_0\subset\om$, i.e., in the interior case, the result is classical. See for example \cite[Theorem B]{Aro68}, which gives \eqref{eq.localbdd} with an $L^2$ version on the right-hand side. Going from $L^2$ to $L^1$ is also classical and is based on a dilation argument. 
When $Q_0\cap\pom\neq\emptyset$, we use the reflection \eqref{eqdef.reflec} to reduce matters to the interior case. We skip the details.
\end{proof}

\begin{lemma}[Poincar\'e inequality for solutions]\label{lem.Pnc_sol} 
Let $Q_0=Q_0'\times (T_0,T_1)$ be a cube satisfying $Q_0\cap\om\neq\emptyset$. 
Let $u$ be a weak solution of $Lu=-\div  F$ in $Q_0\cap\om$, and assume in addition that $u$ has zero Neumann data on $Q_0\cap\pom$ if $Q_0\cap\pom\neq\emptyset$.
Let $(X,t)\in Q_0\cap\overline{\om}$ and let $J_r:=J_r(X,t)$ be a backward in time parabolic cube that satisfies $J_{4r}\subset Q_0$.  

Then there is some constant $C$ such that 
\[
\iint_{J_r\cap\om}|u-c_u|dXdt\le C r\iint_{J_{2r}\cap\om}\br{|\nabla u|+|  F|}dXdt,
\]
where $c_u=\fiint_{J_r\cap\om}u\,dXdt$.
\end{lemma}
\begin{proof}
    We only prove the case when  $Q_0\cap\pom\neq\emptyset$, as the interior case is similar. We can assume without loss of generality that $(0,0)\in Q_0\cap \pom$ and $J_r=Q'\times (-r^2,0)$, where $Q'=\set{X\in\Rn: |x_i|<r}$ is the cube in $\Rn$ centered at $0$ with sidelength $r$. For $t\in(T_0, T_1)$, we set $\avg{u}(t):=\fint_{Q'\cap\OO}u(Y,t)dY$. 
    By the triangle inequality and the Poincar\'e inequality in the spatial variables, 
    \begin{multline}\label{eq.u-cu}
         \iint_{J_r\cap\om}|u(X,t)-c_u|dXdt\le  \iint_{J_r\cap\om}|u(X,t)-\langle u\rangle(t)|dXdt+\iint_{J_r\cap\om}|\avg{u}(t)-c_u|dXdt\\
         \le Cr\int_{-r^2}^{0}\int_{Q'\cap\OO}\abs{\nabla u(X,t)}dXdt
         +|Q'\cap\OO|\int_{-r^2}^{0}|\avg{u}(t)-c_u|dt.
    \end{multline}
   Let $\eta=\eta(X)$ be a smooth function with $\supp\eta\subset 2Q'$, $\eta=1$ on $Q'$, $0\le \eta\le 1$, and $|\nabla\eta|\lesssim r^{-1}$. We define 
   \[
   \avg{u}_\eta(t):= \int_{\OO}u(Y,t)\eta(Y)dY \quad\text{for }t\in (T_0,T_1),
   \]
   and set $c_\eta:=\int_{\OO}\eta\,dX$. For $t,t'\in [-r^2,0]$, we have
   \begin{multline*}
       \avg{u}_\eta(t)-\avg{u}_\eta(t')=\int_{\OO}(u(X,t)-u(X,t'))\eta(X)dX
       =\int_{\OO}\int_{t'}^t \dr_\tau u(X,\tau)\eta(X)d\tau dX\\
       =-\int_{\OO}\int_{t'}^tA(X,\tau)\nabla u(X,\tau)\cdot\nabla \eta (X) d\tau dX-\int_{\OO}\int_{t'}^t   F(X,\tau)\nabla\eta(X)d\tau dX
   \end{multline*}
    since $u$ is a weak solution to $Lu=-\divg  F$ with zero Neumann data on $\pom\cap Q_0$ (see \eqref{1.5} and the remark after it). Using the bound on $\nabla\eta$, we obtain that 
    \begin{equation}\label{eq.avgt-t'}
        |\avg{u}_\eta(t)-\avg{u}_\eta(t')|\le \frac{C}{r}\int_{\OO\cap 2Q'}\int_{-r^2}^{0}\br{|\nabla u| +|  F|}dtdX \quad\text{for }t,t'\in [-r^2,0].
    \end{equation}
    On the other hand, 
    \begin{multline*}
        \abs{\avg{u}(t)-\frac1{c_\eta}\avg{u}_\eta(t)}
        =\abs{\int_{\OO}\frac{\eta(Y)}{c_\eta}\br{u(Y,t)-\fint_{Q'\cap\OO}u(X,t)dX}dY}\\
        \le \frac1{c_\eta}\int_{\OO\cap 2Q'}\abs{u(Y,t)-\fint_{Q'\cap\OO}u(X,t)dX}dY
        \le \frac{Cr}{c_\eta}\int_{\OO\cap 2Q'}|\nabla u(Y,t)|dY\\
        \le Cr\fint_{\OO\cap 2Q'}|\nabla u(Y,t)|dY,
    \end{multline*}
    where we have used the Poincar\'e inequality in the spatial variables in the second-to-last inequality and $c_\eta\ge |\OO\cap Q'|\gtrsim |\OO\cap 2Q'|$ in the last inequality. By this and \eqref{eq.avgt-t'}, we get that 
    \[
        \int_{-r^2}^{0}|\avg{u}(t)-c_u|dt=\int_{-r^2}^{0}\abs{\fint_{-r^2}^{0}\br{\avg{u}(t)-\avg{u}(t')}dt'}dt
        \le C r\fint_{\OO\cap 2Q'}\int_{-r^2}^{0}\br{|\nabla u|+|  F|}dtdX.
    \]
    Combining this estimate with \eqref{eq.u-cu}, one obtains that
    \[
     \iint_{J_r\cap\om}|u(X,t)-c_u|dXdt\le C r\int_{\OO\cap 2Q'}\int_{-r^2}^{0}\br{|\nabla u|+|  F|}dtdX,
    \]
    as desired.
\end{proof}

In the following lemma we show relations between the Dirichlet and Poisson-Dirichlet problems. We remark that although it can be shown that the solvability of \Dq$^{L^*}$, \PDq$^{L^*}$ and \PRp$^{L}$ are actually {\it equivalent}, we will only derive the partial implications that are sufficient for our purpose. 
\begin{lemma}\label{lem.DqPDq}
    Let $p\in(1,\infty)$. Then
    \[  \text{\Dq$^{L^*}$} \implies \text{\PDq$^{L^*}$} \implies  \text{\PRp$^{L}$}.\]
\end{lemma}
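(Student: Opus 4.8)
The plan is to prove the two implications separately. The implication \PDq$^{L^*}$ $\Rightarrow$ \PRp$^L$ is a short duality computation, resting on the tent-space pairings \eqref{eq.CNdual} and \eqref{eq.NpC}, Caccioppoli's inequality, and the elementary pointwise bound $\wt N(\cdot)\lesssim\wt\C(\cdot)$. The implication \Dq$^{L^*}$ $\Rightarrow$ \PDq$^{L^*}$ is where the substantive work lies, and it is the step I expect to be the main obstacle, since it relies on parabolic potential theory (Green's function versus caloric-measure comparison, and boundary Harnack on the Lipschitz cylinder) rather than on soft arguments.

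\smallskip\noindent\textit{Step 1} (\Dq$^{L^*}$ $\Rightarrow$ \PDq$^{L^*}$). Let $u$ be the energy solution of $L^*u=\divg F$ with $u|_{\pom}=0$ and $F\in L^\infty_c(\om,\Rn)$. First I would reduce to bounding $\wt N(u)$: the inhomogeneous form of the local boundedness estimate (Lemma~\ref{lem.MoserNeu}) gives $N(u)\lesssim\wt N(u)+\wt N(\delta F)$ pointwise on $\pom$, and $\wt N(\delta F)\lesssim\wt\C(\delta F)$ pointwise (by a direct comparison of the definitions, truncating $F$ on Whitney cubes), so it is enough to estimate $\|\wt N(u)\|_{L^{p'}(\pom)}$. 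Then I would use the Green's function $G_{L^*}$ of $L^*$ in $\om$ and the representation $u(X,t)=-\iint_\om\nabla_{(Y,s)}G_{L^*}(X,t;Y,s)\cdot F(Y,s)\,dY\,ds$. Splitting $F$ into a part supported in a Whitney neighbourhood of a given cone point $(X,t)$ --- controlled by the energy bound $\|\nabla u\|_{L^2(\om)}\lesssim\|F\|_{L^2(\om)}$ together with the Sobolev embedding $\dot\E_0\hookrightarrow L^{2(n+2)/n}$ recalled in Section~\ref{S2} --- and a far part, decomposed over Whitney cubes on which Lemma~\ref{L:Caccio} and the interior Harnack inequality apply, I expect to reach a pointwise potential estimate of the shape
\[
\wt N(u)(q,\tau)\lesssim\wt\C(\delta F)(q,\tau)+\sup_{(X,t)\in\Gamma(q,\tau)}\iint_\om G_{L^*}(X,t;Y,s)\,\frac{\wt F(Y,s)}{\delta(Y,s)}\,dY\,ds,
\]
where $\wt F$ denotes the $L^2$ average of $|F|$ over Whitney cubes. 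Finally I would bring in \Dq$^{L^*}$ in the form it encodes --- an $A_\infty$/reverse-H\"older property of the $L^*$-caloric measure, as in \cite{DiS} --- combine it with the CFMS-type comparison between $G_{L^*}$ and that measure, and use that $\wt\C(\delta F)$ is comparable to a plain Carleson average of $\wt F$ (the $\delta$-weights absorb); a standard Carleson-measure computation then yields $\|\wt N(u)\|_{L^{p'}(\pom)}\lesssim\|\wt\C(\delta F)\|_{L^{p'}(\pom)}$, which is \PDq$^{L^*}$.

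\smallskip\noindent\textit{Step 2} (\PDq$^{L^*}$ $\Rightarrow$ \PRp$^L$). Let $u$ be the energy solution of $Lu=h+\divg F$ with $u|_{\pom}=0$, $h\in L^\infty_c(\om)$ and $F\in L^\infty_c(\om,\Rn)$ (we interpret $N(\nabla u)$ in \PRp\ as the averaged $\wt N(\nabla u)$, in line with \wPRNp). By \eqref{eq.NpC}, applied to the components of $\nabla u$, it suffices to bound $|\iint_\om\nabla u\cdot G|$ uniformly over $G\in L^\infty_c(\om,\Rn)$ with $\|\wt\C(\delta G)\|_{L^{p'}(\pom)}=1$, and by density $G$ may be taken smooth. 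Let $v$ solve the $L^*$ Poisson--Dirichlet problem $L^*v=\divg G$ with $v|_{\pom}=0$; then \PDq$^{L^*}$ gives $\|N(v)\|_{L^{p'}(\pom)}\lesssim\|\wt\C(\delta G)\|_{L^{p'}(\pom)}=1$. Since $u$ and $v$ are energy solutions with zero trace, the adjoint identity $\iint_\om(Lu)\,v=\iint_\om u\,(L^*v)$ holds --- this is integration by parts, legitimate in the energy class of Section~\ref{RwEs}, the time part $\HT\dhalf(\cdot)\cdot\dhalf(\cdot)$ being skew-symmetric --- and inserting $L^*v=\divg G$ and $Lu=h+\divg F$ and integrating by parts once more yields
\[
\iint_\om\nabla u\cdot G=-\iint_\om h\,v+\iint_\om F\cdot\nabla v.
\]
I would bound the first term, via \eqref{eq.CNdual}, by $\|\wt\C(\delta h)\|_{L^p(\pom)}\|\wt N(v)\|_{L^{p'}(\pom)}\lesssim\|\wt\C(\delta h)\|_{L^p(\pom)}\|N(v)\|_{L^{p'}(\pom)}\lesssim\|\wt\C(\delta h)\|_{L^p(\pom)}$, and the second, after writing $\iint_\om F\cdot\nabla v=\iint_\om(\delta^{-1}F)\cdot(\delta\nabla v)$, again via \eqref{eq.CNdual}, by $\|\wt\C(F)\|_{L^p(\pom)}\|\wt N(\delta\nabla v)\|_{L^{p'}(\pom)}$; here Caccioppoli's inequality (inhomogeneous Lemma~\ref{L:Caccio}) on Whitney cubes, together with Lemma~\ref{lemNCA}, gives $\wt N(\delta\nabla v)\lesssim N(v)+\wt N(\delta G)$ pointwise, so $\|\wt N(\delta\nabla v)\|_{L^{p'}(\pom)}\lesssim\|N(v)\|_{L^{p'}(\pom)}+\|\wt\C(\delta G)\|_{L^{p'}(\pom)}\lesssim1$, invoking $\wt N(\delta G)\lesssim\wt\C(\delta G)$ once more. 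Combining, $\|\wt N(\nabla u)\|_{L^p(\pom)}\lesssim\|\wt\C(\delta h)\|_{L^p(\pom)}+\|\wt\C(F)\|_{L^p(\pom)}$, which is \PRp$^L$.
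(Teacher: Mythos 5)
Your Step 2 (\PDq$^{L^*}\Rightarrow$\PRp$^L$) matches the paper almost line for line: the same tent-space duality via \eqref{eq.NpC}, the same auxiliary problem $L^*v=\divg G$, the same identity
\[
\iint_{\om}\nabla u\cdot G\,dXdt=-\iint_{\om}hv\,dXdt+\iint_{\om}F\cdot\nabla v\,dXdt,
\]
the first term bounded by \eqref{eq.CNdual} and \PDq, the second by \eqref{eq.CNdual} after a Caccioppoli estimate on Whitney cubes gives $\wt N(\delta\nabla v)\lesssim\wt N(v)+\wt\C(\delta G)$ pointwise. Your aside that $N(\nabla u)$ in \PRp\ should be read as $\wt N(\nabla u)$ is consistent with what the paper actually proves.

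Step 1 (\Dq$^{L^*}\Rightarrow$\PDq$^{L^*}$) is where you depart from the paper. The paper does not rebuild the potential theory; it imports Ulmer's discretized area functional $T_q(F)$ together with the bound $\|N(v)\|_{L^{p'}}\lesssim\|T_q(F)\|_{L^{p'}}$ (Lemmas 3.7 and 3.10 of \cite{Ulm}), valid under \Dq$^{L^*}$ for $q>\tfrac{n}{2}+1$, and then makes one surgical modification: the constraint on $q$ in \cite{Ulm} enters only through the local Whitney-scale piece $\iint_{Q_{X,t}}\nabla_Y G\cdot F$ of the Green representation, which Ulmer handles via an $L^q$ bound on $\nabla_Y G$; replacing that pointwise estimate by an $L^2$ average over $Q_{X,t}/2$, the energy estimate and the Sobolev embedding $\dot\E_0\hookrightarrow L^{2(n+2)/n}$ give the same bound by $T_1(F)$. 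The remaining work is elementary: a pointwise bound $T_1(F)\lesssim\wt\A_{1,2}^{a,c}(\delta F)$ followed by the tent-space equivalences \eqref{eq.Aac=A} and \eqref{eq.C=A}. You independently identified the same key refinement --- controlling the local piece by the energy estimate plus Sobolev rather than by a Green-gradient bound --- and that is genuinely the new ingredient in the paper's argument. However, you then propose to also reprove the far-field part from scratch (Green function, $A_\infty$ of the $L^*$-caloric measure, a CFMS-type comparison, a Carleson-measure computation). As written this is a programme, not a proof: ``I expect to reach a pointwise potential estimate of the shape\dots'' and ``a standard Carleson-measure computation then yields\dots'' defer exactly the hard parabolic potential theory, and you would in addition need to justify that \Dq$^{L^*}$ encodes an $A_\infty$ property of the time-dependent $L^*$-caloric measure before a CFMS comparison is even available. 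The paper sidesteps all of this by quoting \cite{Ulm}; without filling in the far-field estimates, your Step 1 remains an outline rather than a complete argument.
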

\begin{proof}
The implication \Dq$^{L^*}$ $\implies$ \PDq$^{L^*}$ follows with a few modifications from the argument in \cite[Section 3]{Ulm}. 
    Let $F\in L_c^\infty(\om,\Rn)$, and let $v\in \dot E_0(\om)$ be the energy solution to $L^*v=\divg F$ in $\om$ with $v=0$ on $\pom$. 
    For $(P,t)\in \pom$, $q\ge 1$, define the discretized area functional 
    \[
    T_q(F)(P,t):=\sum_{I\in \W(P,t)}\ell(I)\br{\fiint_{I^+}|F|^{2q}}^{1/{2q}}
    \]
    as in \cite[Definition 3.6]{Ulm}, where $\W(P,t):=\set{\pom\cap Q_{2^j}(P,t): j\in\Z}$ is a collection of parabolic cubes centered at $(P,t)$ on the boundary, and for $I\in\W(P,t)$, $I^+$ is a parabolic Whitney cube in $\om$ above $I$, with $\ell(I^+)=\ell(I)$. From \cite[Lemma 3.7]{Ulm} and the proof of \cite[Lemma 3.10]{Ulm}, it follows that if \Dq$^{L^*}$ is solvable, then for $q>\frac{n}{2}+1$,
    \begin{equation}\label{eq.Nv_Ulm}
        \|N(v)\|_{L^{p'}(\pom)}\lesssim\|T_q(F)\|_{L^{p'}(\pom)}.
    \end{equation}
    We claim that we can obtain
    \begin{equation}\label{eq.Nv-A}
        \|\wt N(v)\|_{L^{p'}(\pom)}\lesssim\|T_1(F)\|_{L^{p'}(\pom)}\lesssim\|\wt{\A}(\delta F)\|_{L^{p'}(\pom)}.
    \end{equation}
    Note that by \eqref{eq.C=A}, this means that \PDq$^{L^*}$ is solvable. 
    
    To see the first inequality in \eqref{eq.Nv-A}, we note that the constraint on $q$ in \eqref{eq.Nv_Ulm} comes only from the estimate for the local part 
    \[\wt v(X,t):=\iint_{Q_{X,t}}\nabla_Y G(X,t,Y,s)\cdot F(Y,s)dYds\] in the proof of \cite[Lemma 3.7]{Ulm}, where $Q_{X,t}:=Q_\frac{\delta(X,t)}{2}(X,t)$. This is because an estimate on the $L^q$ norm of the gradient of the Green function is used there to deduce that $\wt v(X,t)\lesssim T_q(F)(P,t)$ for $(X,t)\in I^+$, $I\in\W(P,t)$. However, this constraint can be removed and $q$ can be set equal to 1 if one takes average in the local part, which allows us to apply energy estimates. To be more precise, for fixed $(X,t)\in I^+$, with $I\in\W(P,t)$, we consider 
    \[
    \vv(Z,\tau):=\iint_{Q_{X,t}}\nabla_Y G(Z,\tau,Y,s)\cdot F(Y,s)dYds, \quad (Z,\tau)\in\om.
    \]
    Then $\vv\in\dot E_0(\om)$ solves $L\vv=\divg\br{F\1_{Q_{X,t}}}$ in $\om$.  Hence,  by Sobolev embedding $\dot E_0(\om)\hookrightarrow L^{\frac{2(n+2)}{n}}(\om)$ (c.f. subsection \ref{ESPP}) and the energy estimate, one obtains that
   \begin{multline*}
       \br{\fiint_{Q_{X,t}/2}|\vv(Z,\tau)|^{\frac{2(n+2)}{n}}dZd\tau}^{\frac{n}{2(n+2)}}
       \lesssim \delta(X,t)^{-\frac{n}{2}}\br{\iint_{\om}|\vv|^{\frac{2(n+2)}{n}}dZd\tau}^{\frac{n}{2(n+2)}}\\
       \lesssim \delta(X,t)^{-\frac{n}{2}}
      \|\vv\|_{\dot \E_0(\Omega)}
       \lesssim \delta(X,t)^{-\frac{n}{2}}\br{\iint_{Q_{X,t}}|F|^{2}dZd\tau}^{1/{2}}\\\lesssim\delta(X,t)\br{\fiint_{Q_{X,t}}|F|^{2}dZd\tau}^{1/{2}}
       \le T_1(F)(P,t).
   \end{multline*}
This modification to the proof of \cite[Lemma 3.7]{Ulm} yields the first inequality in \eqref{eq.Nv-A}.

\medskip 
Next, we justify the second inequality in \eqref{eq.Nv-A}. We shall prove the pointwise bound
\begin{equation}\label{eq.T-A}
    T_1(F)(P,t)\lesssim \wt\A_{1,2}^{a,c}(\delta F)(P,t) \quad \text{for } (P,t)\in\pom
\end{equation}
for some $a>0$ and $c\in(0,1)$. Then the desired the estimate follows from \eqref{eq.Aac=A}. To see \eqref{eq.T-A}, we observe that $I^+\subset Q_{c\delta(Y,s)/2}(Y,s)$ for all $(Y,s)\in I^+$ for some $c\in(0,1)$ close to 1. This implies that for $(P,t)\in\pom$, 
\begin{multline*}
   T_1(F)(P,t)\lesssim\sum_{I\in\W(P,t)}\ell(I)\fiint_{I^+}\br{\fiint_{Q_{c\delta(Y,s)/2}(Y,s)}|F|^2dZd\tau}^{1/2}dYds\\
   \lesssim \sum_{I\in\W(P,t)}\iint_{I^+}\br{\fiint_{Q_{c\delta(Y,s)/2}(Y,s)}|\delta F|^2dZd\tau}^{1/2}\frac{dYds}{\delta(Y,s)^{n+2}} \le \wt A^{a,c}_{1,2}(\delta F)(P,t) 
\end{multline*}
for some $a>0$ large enough so that the cone $\Gamma_a(P,t)$ contains $\bigcup_{I\in\W(P,t)}I^+$. This completes the proof of \eqref{eq.T-A} and thus \eqref{eq.Nv-A} and the implication  \Dq$^{L^*}$ $\implies$ \PDq$^{L^*}$. 
\medskip

The implication \PDq$^{L^*}$ $\implies$ \PRp$^L$ follows from duality. To see this, let $h\in L_c^\infty(\om)$, $F\in L_c^\infty(\om,\Rn)$, and let $u\in\dot E_0(\om)$ be the energy solution to $Lu=h+\divg F$. By \eqref{eq.NpC}, 
\[
\|\wt N(\nabla u)\|_{L^p(\pom)}\lesssim \abs{\iint_{\om}\nabla u\cdot G\,dXdt},
\]
for some $G\in L_c^\infty(\om)$ satisfying $\|\wt\C(\delta G)\|_{L^{p'}(\pom)}\le 1$. Let $v\in \dot E_0(\om)$ be the energy solution to $L^*v=\divg G$. Then using the PDEs satisfied by $u$ and $v$ gives 
\begin{equation}\label{eq.Ngradu-h+F}
    \|\wt N(\nabla u)\|_{L^p(\pom)}\lesssim \abs{-\iint_\om hv \,dXdt+\iint_\om F\cdot\nabla v\, dXdt }.
\end{equation}
By \eqref{eq.CNdual}, we have 
\[
\abs{\iint_\om hv \,dXdt}\lesssim \|\wt N(v)\|_{L^{p'}}\|\wt\C(\delta h)\|_{L^p}
\lesssim \|\wt\C(\delta G)\|_{L^{p'}}\|\wt\C(\delta h)\|_{L^p}\le \|\wt\C(\delta h)\|_{L^p}
\]
by the assumption that \PDq$^{L^*}$ is solvable. Again by \eqref{eq.CNdual}, we have
\[
\abs{\iint_\om F\cdot\nabla v\, dXdt}\lesssim\|\wt N(\delta \nabla v)\|_{L^{p'}}\|\wt\C(F)\|_{L^{p}}.
\]
By Caccioppoli's inequality, for any $(X,t)\in\om$,
\[
\br{\fiint_{Q_{X,t}/2}|\delta\nabla v|^2dYds}^{1/2}\lesssim \br{\fiint_{\frac23 Q_{X,t}}|v|^2}^{1/2}+\br{\fiint_{\frac23 Q_{X,t}}|\delta G|^2}^{1/2}.
\]
This implies that for any $\xi\in\pom$,
\[
\wt N(\delta\nabla v)(\xi)\lesssim \wt N_{1,2}^{2/3}(v)(\xi)+\wt\C^{2/3}_{1,2}(\delta G)(\xi),
\]
which yields that 
\[
\|\wt N(\delta \nabla v)\|_{L^{p'}}\lesssim \|\wt N(v)\|_{L^{p'}}+\|\wt\C(\delta G)\|_{L^{p'}}\lesssim 1.
\]
Combining these estimates with \eqref{eq.Ngradu-h+F}, one obtains that 
\[
\|\wt N(\nabla u)\|_{L^{p'}}\lesssim \|\wt\C(\delta h)\|_{L^p}+\|\wt\C(F)\|_{L^{p}},
\]
as desired. 
\end{proof}

\section{Proof of Theorem~\ref{thm.NtoLoc}}\label{S4}
In this section, we prove Theorem~\ref{thm.NtoLoc}. We begin with  the following lemma.
\begin{lemma}\label{lem.N+D=wPNR}
Let $p\in(1,\infty)$. Then 
    \[ \text{\Np$^L$ $+$ \Dq$^{L^*}$ $\implies$ \wPRNp$^L$ 
    }\]
\end{lemma}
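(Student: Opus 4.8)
The plan is to prove \wPRNp$^L$ by duality, reducing the estimate for the energy solution $u$ to $Lu=-\divg F$ with zero Neumann data to a pair of estimates: one coming from the assumed \Np$^L$ (applied to an auxiliary Neumann solution for $L$) and one from \Dq$^{L^*}$ (equivalently, from \PRp$^L$ via Lemma~\ref{lem.DqPDq}). First I would use the duality \eqref{eq.NpC}: since $\nabla u \in \wt T^p_\infty$, we may write
\[
\|\wt N(\nabla u)\|_{L^p(\pom)} \lesssim \abs{\iint_\om \nabla u\cdot G\,dXdt}
\]
for some $G\in L_c^\infty(\om,\R^n)$ with $\|\wt\C(\delta G)\|_{L^{p'}(\pom)}\le 1$; actually it is cleaner to dualize $u$ itself against a scalar test datum, so I would instead test against $\Phi\in L^\infty_c(\om)$ with $\|\wt\C(\delta\Phi)\|_{L^{p'}}\le1$ and write $\|\wt N(u)\|_{L^p}\lesssim\abs{\iint_\om u\Phi\,dXdt}$ (noting that to run \wPRNp\ we need $\wt N(\nabla u)$, not $\wt N(u)$, so the test function must be paired with $\nabla u$; hence $G$ is the right choice). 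Let $v\in\dot E(\om)$ be the energy solution to the \emph{adjoint} Neumann problem $L^*v=-\divg G$ with zero Neumann data — this is well-posed in the energy class by the Lax–Milgram argument in Section~\ref{RwEs}.

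Next I would insert the weak formulations. Both $u$ and $v$ are energy solutions with zero Neumann data, so from \eqref{eq.NeumannBdy} (with no boundary term, since the Neumann data vanish) applied with the other function as test function, we get $a(u,v) = \iint_\om F\cdot\nabla v\,dXdt$ on one hand and $a(u,v)=\iint_\om \nabla u\cdot G\,dXdt$ on the other — here I am using that $A\nabla u\cdot\nabla v + \HT\dhalf u\cdot\dhalf v$ is the common sesquilinear form, symmetric enough to pair either way once one accounts for $L$ versus $L^*=\dr_t+\divg(A^T\nabla\cdot)$ (the Hilbert-transform term changes sign, which is exactly the asymmetry encoded in passing from $L$ to $L^*$; I would verify this matches the conventions in \eqref{eq.NeumannBdy} and the definition of $a$). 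Thus
\[
\abs{\iint_\om \nabla u\cdot G\,dXdt} = \abs{\iint_\om F\cdot\nabla v\,dXdt} \lesssim \|\wt N(\delta\nabla v)\|_{L^{p'}(\pom)}\,\|\wt\C(F)\|_{L^p(\pom)},
\]
where the last step is the Carleson–nontangential duality \eqref{eq.CNdual} applied to the pair $(\delta\nabla v, F)$ (rewriting $\iint F\cdot\nabla v = \iint (\delta F)\cdot(\nabla v)$ is not quite the pairing in \eqref{eq.CNdual}; rather $\iint F\cdot\nabla v=\iint (\delta\nabla v)\cdot F$, matching $u=\delta\nabla v$, $\mathrm{(form)}=F/\delta$ with $\delta\cdot(F/\delta)=F$, so \eqref{eq.CNdual} gives exactly $\|\wt\C(F)\|_{L^p}\|\wt N(\delta\nabla v)\|_{L^{p'}}$).

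It then remains to show $\|\wt N(\delta\nabla v)\|_{L^{p'}(\pom)}\lesssim 1$, i.e.\ bounded by $\|\wt\C(\delta G)\|_{L^{p'}}$. This is where the hypotheses on $L^*$ and $L$ enter. By Caccioppoli's inequality (Lemma~\ref{L:Caccio}, applied to $v$, which solves $L^*v=-\divg G$ with zero Neumann data, on both interior and boundary Whitney cubes — note $L^*$ is also of the admissible form so the lemma applies), for each $(X,t)\in\om$,
\[
\br{\fiint_{Q_{X,t}/2}\abs{\delta\nabla v}^2}^{1/2}\lesssim \br{\fiint_{\frac23 Q_{X,t}}\abs{v-c}^2}^{1/2}+\br{\fiint_{\frac23 Q_{X,t}}\abs{\delta G}^2}^{1/2}
\]
for a suitable constant $c$; choosing $c$ to be an average of $v$ and taking nontangential sups, this yields $\wt N(\delta\nabla v)(\xi)\lesssim \wt N^{2/3}_{1,2}(v)(\xi)+\wt\C^{2/3}_{1,2}(\delta G)(\xi)$ pointwise on $\pom$, hence $\|\wt N(\delta\nabla v)\|_{L^{p'}}\lesssim \|\wt N(v)\|_{L^{p'}}+\|\wt\C(\delta G)\|_{L^{p'}}$ by Lemma~\ref{lemNCA}. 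The term $\|\wt\C(\delta G)\|_{L^{p'}}\le 1$ by our choice of $G$. For $\|\wt N(v)\|_{L^{p'}}$: $v$ is the \emph{Neumann} energy solution for $L^*$ with data $-\divg G$, and this is controlled by \wPNwq{p'}$_{L^*}$, which in turn should follow from \Dq$^{L^*}$ and \Np$^L$ — this is precisely the content of the adjoint version of the lemma we are proving, or more efficiently we invoke that \Np$^L$ and \Dq$^{L^*}$ together give control of $\wt N$ of Neumann solutions for $L^*$. The cleanest route, mirroring the structure of Lemma~\ref{lem.DqPDq}, is: \Np$^L$ directly gives $\|\wt N(v)\|_{L^{p'}}\lesssim\|$data$\|_{L^{p'}}$ only when the data is in $L^{p'}$, which $-\divg G$ is not; so instead one decomposes $v = v_{\mathrm{loc}} + v_{\mathrm{glob}}$, handling the local part by energy/Sobolev estimates as in the proof of Lemma~\ref{lem.DqPDq} (the $\vv$ argument, using $\dot E_0\hookrightarrow L^{2(n+2)/n}$) and the global part by \Np$^L$.

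\textbf{Main obstacle.} The crux is the final step: controlling $\|\wt N(v)\|_{L^{p'}}$ for the adjoint Neumann solution $v$ with divergence-form data $-\divg G$. One cannot simply quote \Np$^{L^*}$ because (a) the paper only assumes \Np$^L$, not \Np$^{L^*}$, and (b) $-\divg G\notin L^{p'}(\pom)$, so even \Np$^{L^*}$ in its stated form would not directly apply. Resolving this requires the Green-function decomposition of $v$ into a local averaged piece (estimated by the Sobolev embedding $\dot E_0\hookrightarrow L^{2(n+2)/n}$ and the energy inequality, exactly as in Lemma~\ref{lem.DqPDq}, producing a bound by $T_1(\delta G)\lesssim\wt\A(\delta G)$ and thence $\wt\C(\delta G)$ via \eqref{eq.C=A}) plus a far-away piece to which \Np (for the correct operator) applies with honest $L^{p'}$ boundary data; threading the roles of $L$ versus $L^*$ correctly through this decomposition, and checking that the hypothesis \Np$^L$ (as opposed to its adjoint) is what is genuinely needed here, is the delicate bookkeeping. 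I expect this to occupy the bulk of the proof, with everything else being the routine duality manipulations sketched above.
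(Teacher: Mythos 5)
Your duality approach is genuinely different from the paper's, and the gap you flag at the end is more than ``delicate bookkeeping'' — it is, as you set things up, a circularity.

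The formal reductions are sound up to the critical step: dualizing $\wt N(\nabla u)$ against $G$, introducing the adjoint Neumann--Poisson solution $v$ to $L^*v=-\divg G$, and using $\iint \nabla u\cdot G = \iint F\cdot\nabla v$ together with \eqref{eq.CNdual} to reduce everything to $\|\wt N(\delta\nabla v)\|_{L^{p'}}\lesssim \|\wt\C(\delta G)\|_{L^{p'}}$. But that estimate \emph{is} \wPNq$^{L^*}$, and the paper obtains \wPNq$^{L^*}$ only as a consequence of \wPRNp$^{L}$ (Lemma~\ref{lem.wPNR=wPN}), by running exactly the duality you are using, in the opposite direction. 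So unless you give an independent proof of \wPNq$^{L^*}$ from \Np$^L$ and \Dq$^{L^*}$, you have reduced the lemma to something that is established downstream of it. The ``Green-function decomposition'' you gesture at does not obviously break this circle: the argument in Lemma~\ref{lem.DqPDq} that you want to mimic is built on the zero-Dirichlet Green function and uses \Dq$^{L^*}$ to handle the global piece; for your $v$, which has zero \emph{Neumann} data, the relevant Green function and the solvability needed for the far part are different, and threading \Np$^L$ (for the non-adjoint operator, with $L^p$ rather than divergence-form data) through that decomposition is not a routine bookkeeping exercise.

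The paper's route avoids the adjoint altogether by decomposing $u$ itself. It writes $u=u_D+w$, where $u_D\in\dot\E_0$ solves $Lu_D=-\divg F$ with zero Dirichlet data and $w=u-u_D$ solves $Lw=0$. From \Dq$^{L^*}$ one gets \PRp$^L$ (Lemma~\ref{lem.DqPDq}), hence $\|\wt N(\nabla u_D)\|_{L^p}\lesssim\|\wt\C(F)\|_{L^p}$. The substantive work is then to show that $w$ has an honest $L^p$ Neumann datum $g=-\dr_\nu^A u_D$ on $\pom$ with $\|g\|_{L^p(\pom)}\le\|\wt N(\nabla u_D)\|_{L^p(\pom)}$; the paper does this by producing $g$ as a weak $L^p$ limit of co-normal averages $g_r$ built from a geometrically chosen cutoff $\theta_r$. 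Once $g$ is in hand, \Np$^L$ applied to $w$ closes the estimate. The two approaches thus differ at the root: yours pivots to an adjoint auxiliary solution whose nontangential estimate you cannot yet produce, whereas the paper splits the original solution so that each piece falls to one of the two assumed solvabilities for the stated (not adjoint) operators.
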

\begin{proof}
    Let $F \in {L^\infty_c}(\Omega)$, and let $u\in \dot E(\om)$ be the energy solution to $Lu=-\divg F$ in $\Omega$ with zero Neumann data. We want to prove that 
\[\|\wt N(\nabla u)\|_{L^p(\partial \Omega)} \leq C \|\wt \C(F)\|_{L^p(\partial \Omega)}\]
with a constant $C>0$ independent of $F$.

Let $u_D \in \dot\E_0(\Omega)$ be the energy solution of $Lu_D=-\divg F$ in $\Omega$ with zero Dirichlet data.
Set $w:=u-u_D$. We claim that $w\in \dot E(\om)$ is the energy solution to  $Lw=0$ in $\Omega$ with $\dr_\nu^A w|_{\pom}=g$ for some $g\in L^p(\pom)\cap \dot H^{-1/4}_{\dr_t-\Delta_x}(\pom)$ that satisfies 
\begin{equation}\label{eq.gLp}
    \|g\|_{L^p(\pom)}\le \|\wt N(\nabla u_D)\|_{L^p(\pom)}.
\end{equation}
Recall that $\dr_\nu^A w|_{\pom}=g$ is in the weak sense (as in \eqref{eq.NeumannBdy}). Once \eqref{eq.gLp} is established, by the assumption that \Np$^{L}$ and \Dq$^{L^*}$ hold and Lemma~\ref{lem.DqPDq}, we obtain that 
\begin{multline} \label{Np1}
\|\wt N(\nabla u)\|_{L^p(\partial \Omega)}\le \|\wt N(\nabla u_D)\|_{L^p(\partial \Omega)}+
\|\wt N(\nabla w)\|_{L^p(\partial \Omega)}\\ \le \|\wt N(\nabla u_D)\|_{L^p(\partial \Omega)}+\|g\|_{L^p(\partial \Omega)} \le 2\|\wt N(\nabla u_D)\|_{L^p(\partial \Omega)} \lesssim \|\wt \C(F)\|_{L^p(\partial \Omega)}. 
\end{multline}
\smallskip

It remains to prove that there exists some $g\in L^p(\pom)\cap \dot H^{-1/4}_{\dr_t-\Delta_x}(\pom)$ such that 
\begin{equation}\label{eq.drnw=g}
    \iint_{\mathcal O\times\R}\left[A \nabla w \cdot{\nabla \varphi} + \HT \dhalf w \cdot {\dhalf \varphi}\right] \d X \d t=\int_{\dr\OO\times\R}g\vp \quad \text{for all }\vp\in\dot E,
\end{equation}
and that $g$ satisfies the bound \eqref{eq.gLp}.

For simplicity we assume our domain is 
$\Omega=\R^n_+\times\R$, the generalization to any bounded or unbounded Lipschitz domain follows by localization and partition of unity.

Note that since $u\in \dot E(\om)$ solves $Lu=-\div F$ in $\om$ and $u$ has zero Neumann data, the left-hand side of \eqref{eq.drnw=g} is equal to 
\[
 -\iint_{\mathcal O\times\R}\left[A \nabla u_D \cdot{\nabla \varphi} + \HT \dhalf u_D \cdot {\dhalf \varphi}-F\cdot\nabla\vp\right] \d X \d t.
\]
Therefore, we consider the linear functional $\Lambda$ defined for $\vp\in \dot H^{1/4}_{\dr_t-\Delta_x}(\pom)$ by
\begin{equation}\label{eqdef.Lbd}
    \Lambda(\varphi):=
-\iint_{\mathcal O\times\R} \left[A \nabla u_D \cdot{\nabla \Phi} + \HT \dhalf u_D \cdot {\dhalf \Phi}-F\nabla\Phi\right] \d X \d t,
\end{equation}
where $\Phi$ is any extension of $\vp$ in $\dot E(\om)$. Note that this is well-defined because if $\Phi_1$ and $\Phi_2$ are two extensions of $\vp$ in $\dot E(\om)$, then $\Phi_1-\Phi_2\in \dot E_0(\om)$ and so 
\[
\iint_{\mathcal O\times\R} \left[A \nabla u_D \cdot{\nabla (\Phi_1-\Phi_2)} + \HT \dhalf u_D \cdot {\dhalf (\Phi_1-\Phi_2)}-F\nabla(\Phi_1-\Phi_2)\right] \d X \d t =0 
\]
since $L u_D=-\divg F$. Moreover, $\Lambda$ is bounded because for any $\phi\in \dot H^{1/4}_{\dr_t-\Delta_x}(\pom)$, we have that 
\[
|\Lambda(\vp)|\lesssim \br{\|u_D\|_{\dot E}+\|F\|_{L^2(\om)}}\|\Phi\|_{\dot E(\om)}\lesssim\br{\|u_D\|_{\dot E}+\|F\|_{L^2(\om)}}\|\vp\|_{\dot H^{1/4}_{\dr_t-\Delta_x}(\pom)},
\]
where we have used that $\|F\|_{L^2(\om)}<\infty$ since $F\in L_c^\infty(\om)$. Hence, there exists a unique $g\in\Hdot^{-1/4}_{\pd_{t} - \Delta_x}(\partial \Omega)$ such that
$\Lambda(\varphi)=\langle g,\varphi\rangle$ for all  $\vp\in \dot H^{1/4}_{\dr_t-\Delta_x}(\pom)$. As Lipschitz functions 
$\vp:\R^{n+1}\to\R$ with bounded support restricted to $\pom$ are dense in 
$ \dot H^{1/4}_{\dr_t-\Delta_x}(\pom)$ it follows that the identity 
$\Lambda(\varphi)=\langle g,\varphi\rangle$ holds for all such $\varphi$ (and we can take $\Phi$ to be their Lipschitz extension in \eqref{eqdef.Lbd}) and still uniquely determined $g$ in the space $\Hdot^{-1/4}_{\pd_{t} - \Delta_x}(\partial \Omega)$.

We argue that $g\in L^p(\pom)$ with the bound \eqref{eq.gLp} by showing that the co-normal averages defined for any $r>0$ by
\begin{equation}
g_r(x,t)=-\fiint_{B_{r/2}(x,r,t)} A(y,y_n,s)\nabla u_D(y,y_n,s)\cdot e_n dY\,ds
\end{equation}
converge weakly to $g$ in $L^p(\partial\Omega)$ as $r\to 0$. The bound \eqref{eq.gLp} would then follow from the observation that 
$$\|g_r\|_{L^p(\pom)}\le \|\wt N(\nabla u_D)\|_{L^p(\partial \Omega)}<\infty,\quad\mbox{ for all $r>0$}.$$

To show the weak convergence of $g_r$ to $g$ in $L^p(\pom)$, it suffices to show that
for any Lipschitz function $\varphi:\R^{n+1}\to\R$ with bounded support, we have that
\begin{equation}\label{eq-sesqx}
 \iint_{ \Omega} \left[A \nabla u_D \cdot {\nabla \varphi} -u_D \partial_t\varphi-F\nabla \varphi\right] d X d t= \lim_{r\to0_+}\int_{\partial\Omega}g_r(x,t)\varphi(x,r,t)dx\,dt.
 \end{equation}
To see this, we fix $r>0$ and consider the sets 
$$R_r=\bigcup_{(x,t)\in\partial\Omega}B_{r/2}(x,r,t),\qquad S_{X,t}=\{(y,s)\in\partial\Omega: (X,t)\in B_{r/2}(y,r,s)\},$$
 for any $(X,t)\in\Omega$. Finally, let $\sigma(X,t)=\sigma(S_{(X,t)})$. Clearly $\sigma(X,t)>0$ if and only if 
 $(X,t)\in R_r$ and moreover $\sigma(X,t)$ is a function that only depends on $x_n$ (the distance of the point
 $(X,t)$ to the boundary) and hence we shall also write $\sigma(x_n)$. Observe that
 $$\int_0^\infty\sigma(x_n)dx_n=c_nr^{n+2},\quad\mbox{and hence }\quad 1=\frac1{c_nr^{n+2}}\int_0^\infty\sigma(x_n)dx_n.$$
 Hence for any point $(X,t)\in \Omega$ we define $\theta_r(X,t)=\theta_r(x,x_n,t)$ as
$$ \theta_r(X,t)=\frac1{c_nr^{n+2}}\int_0^{x_n}\sigma(s)ds.$$
 By what we have observed above, $\theta_r$ vanishes near the boundary $\partial\Omega$ and equal to $1$ when $x_n>3r/2$. It follows that since $Lu_D=-\divg F$, we have for the test function $\varphi\theta_r$ that 
 \begin{equation}\label{eq.testvt}
     \iint_{ \Omega} \left[A \nabla u_D \cdot {\nabla (\varphi\theta_r)} -u_D\partial_t(\varphi \theta_r)-F\nabla (\varphi\theta_r)\right] d X d t=0.
 \end{equation}
  It follows, since the support of $F$ is away from the boundary, that $\iint_\Omega F\nabla (\varphi\theta_r)=
 \iint_\Omega F\nabla \varphi$ for all $r$ sufficiently small. For the second term on the left-hand side of \eqref{eq.testvt}, as $\theta_r$ does not depend on $t$ we have 
$$\iint_\Omega u_D\partial_t(\varphi \theta_r)=\iint_\Omega \theta_ru_D\partial_t \varphi\to \iint_\Omega u_D\partial_t \varphi,\quad\mbox{as }r\to 0_+.$$
 The first term can be written as sum of two terms (depending on whether the gradient hits $\varphi$ or $\theta$), i.e.,
 \begin{equation}\label{eq232}
 \iint_{ \Omega} A \nabla u_D \cdot {\nabla (\varphi\theta_r)} = 
  \iint_{ \Omega} A \nabla u_D \cdot ({\nabla \varphi)\theta_r} + \iint_{ \Omega} A \nabla u_D \cdot ({\nabla \theta_r)\varphi}
 \end{equation}
 and clearly
 $$  \iint_{ \Omega} A \nabla u_D \cdot ({\nabla \varphi)\theta_r} 
   \to  \iint_{ \Omega} A \nabla u_D\cdot \nabla \varphi\quad\mbox{as }r\to0_+.
 $$ 
 It remains to consider the second term of \eqref{eq232}. Here $\theta_r$ is only a function of $x_n$ and hence
 \begin{equation}\label{eq233}
 \iint_{ \Omega} A \nabla u_D \cdot ({\nabla \theta_r)\varphi}=\iint_{R_r} A_{n_j}\partial_j u_D (\partial_n \theta_r)\varphi=c_n^{-1}r^{-n-2}\iint_{R_r} A_{n_j}\partial_j u_D\sigma(X,t)\varphi(X,t),
 \end{equation} 
since
$$\partial_n\theta_r(X,t)=\frac1{c_nr^{n+2}} \sigma(X,t).$$
However, by Fubini's theorem  
\begin{multline}\label{eq234} 
\int_{\partial\Omega}g_r(x,t)\varphi(x,r,t)dxdt=\\-c_n^{-1}r^{-n-2}\iint_{R_r}A(X,t)\nabla u_D(X,t)\cdot e_n
\sigma(X,t)\left(\fiint_{S_{X,t}}\varphi(Y,s)dY\,ds\right)dx\,dt,
\end{multline}  
where $\fiint_{S_{X,t}}\varphi(Y,s)dY\,ds\to \varphi(X,t)$ as $r\to0_+$ by the Lipschitz character of $\varphi$.
It follows that using \eqref{eq233}
$$\iint_{ \Omega} A \nabla u_D \cdot ({\nabla \theta_r)\varphi}+\int_{\partial\Omega}g_r(x,t)\varphi(x,r,t)\to 0,\qquad\mbox{as }r\to0_+.$$
After putting everything together the claim \eqref{eq-sesqx} follows.

\end{proof}

We now show the duality between the weak Poisson-Neumann problem and the weak Poisson-Neumann-Regularity problem. Although one can deduce one from another, we only show one direction as it is sufficient for the purpose of this paper.
\begin{lemma}\label{lem.wPNR=wPN}
    Let $p\in (1,\infty)$. Then 
    \[
    \text{\wPRNp } \implies \text{\wPNq}^{L^*}.
    \]
\end{lemma}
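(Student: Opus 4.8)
The plan is to prove the implication \wPRNp$^L \implies$ \wPNq$^{L^*}$ by duality, using the pairing between the two Poisson–Neumann-type problems through the sesquilinear form $a(\cdot,\cdot)$ and the $\wt\C$–$\wt N$ duality of the averaged tent spaces (Lemma on duality between $\wt\C$ and $\wt N$). Fix $F\in L^\infty_c(\Omega,\R^n)$ and let $u\in\dot\E(\Omega)$ be the energy solution to $L^*u=-\divg F$ with zero Neumann data; the goal is the estimate $\|\wt N(\delta\nabla u)\|_{L^{p'}(\pom)}\lesssim\|\wt\C(\delta F)\|_{L^{p'}(\pom)}$. By \eqref{eq.NpC} applied to $\delta\nabla u$ (after a Caccioppoli step to pass from $\delta\nabla u$ to a genuine $L^2$-averaged quantity, exactly as done in the proof of Lemma~\ref{lem.DqPDq}), it suffices to bound $\left|\iint_\Omega \delta\nabla u\cdot G\,dX\,dt\right|$ for test fields $G$ with $\|\wt\C(\delta G)\|_{L^p(\pom)}\le 1$; equivalently, writing $G=\nabla(\text{something})$ is not available, so instead I would test directly with the energy solution of the \emph{dual} problem.

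\textbf{Key steps.} First, given $G\in L^\infty_c(\Omega,\R^n)$ with $\|\wt\C(\delta G)\|_{L^p(\pom)}\le 1$, let $v\in\dot\E(\Omega)$ be the energy solution to $Lv=-\divg(\delta G)$ — or more precisely to $Lv=-\divg H$ where $H=\delta G\in L^\infty_c$ — with zero Neumann data. Second, use the weak formulations \eqref{eq.NeumannBdy} of the two Neumann problems: since both $u$ and $v$ lie in $\dot\E$ and have zero co-normal derivative, testing the equation for $u$ against $v$ and the equation for $v$ against $u$ gives
\[
\iint_\Omega\bigl[A^T\nabla u\cdot\nabla v+\HT\dhalf u\cdot\dhalf v\bigr]\,dX\,dt=\iint_\Omega F\cdot\nabla v\,dX\,dt,
\]
and likewise
\[
\iint_\Omega\bigl[A\nabla v\cdot\nabla u+\HT\dhalf v\cdot\dhalf u\bigr]\,dX\,dt=\iint_\Omega H\cdot\nabla u\,dX\,dt=\iint_\Omega \delta G\cdot\nabla u\,dX\,dt.
\]
Since $\iint A^T\nabla u\cdot\nabla v=\iint A\nabla v\cdot\nabla u$ and the Hilbert-transform terms agree as well (using skew-symmetry of $\HT$ on $\dot\E$), the two left-hand sides coincide, so
\[
\iint_\Omega \delta G\cdot\nabla u\,dX\,dt=\iint_\Omega F\cdot\nabla v\,dX\,dt.
\]
Third, estimate the right-hand side by the $\wt\C$–$\wt N$ duality \eqref{eq.CNdual}: $\left|\iint_\Omega F\cdot\nabla v\,dX\,dt\right|\lesssim\|\wt\C(\delta F)\|_{L^{p'}(\pom)}\,\|\wt N(\nabla v)\|_{L^p(\pom)}$. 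Fourth, invoke the hypothesis \wPRNp$^L$ on $v$: since $v$ solves $Lv=-\divg(\delta G)$ with zero Neumann data, $\|\wt N(\nabla v)\|_{L^p(\pom)}\lesssim\|\wt\C(\delta G)\|_{L^p(\pom)}\le 1$. Combining, $\left|\iint_\Omega \delta G\cdot\nabla u\,dX\,dt\right|\lesssim\|\wt\C(\delta F)\|_{L^{p'}(\pom)}$, and taking the supremum over admissible $G$ and using \eqref{eq.NpC} (together with the Caccioppoli passage from $\delta\nabla u$ averages to the unaveraged quantity) yields $\|\wt N(\delta\nabla u)\|_{L^{p'}(\pom)}\lesssim\|\wt\C(\delta F)\|_{L^{p'}(\pom)}$, which is \wPNq$^{L^*}$.

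\textbf{Main obstacle.} The routine parts are the algebraic identity linking the two weak formulations and the two applications of tent-space duality. The step requiring the most care is the reduction via \eqref{eq.NpC}: that estimate is stated for $\wt N(u)$ of a function, whereas here we need it for the $L^2$-averaged $\wt N(\delta\nabla u)$. As in the proof of Lemma~\ref{lem.DqPDq}, one applies Caccioppoli's inequality on Whitney cubes to dominate $\bigl(\fiint_{Q_{X,t}/2}|\delta\nabla u|^2\bigr)^{1/2}$ by averages of $|u-c|$ plus averages of $|\delta F|$ over a slightly larger cube, then splits the resulting pairing $\iint \delta\nabla u\cdot G$ accordingly; the term involving $F$ is controlled directly by $\wt\C(\delta F)$, and the term involving $u$ is handled by the duality above. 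A second point to watch is that $G$ must be replaced by $\delta G$ inside the source term for $v$ so that \wPRNp$^L$ applies with the correct weight — keeping track of the factors of $\delta$ throughout is the real bookkeeping burden, but presents no conceptual difficulty.
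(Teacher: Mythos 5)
Your proposal is correct and follows essentially the same route as the paper's proof: introduce the auxiliary solution $v$ of $Lv=-\divg(\delta G)$ with zero Neumann data, use the two weak formulations (equivalently, one integration by parts) to obtain the identity $\iint_\Omega \delta G\cdot\nabla u\,dXdt = \iint_\Omega F\cdot\nabla v\,dXdt$, and then combine the $\wt\C$--$\wt N$ duality \eqref{eq.CNdual} with the hypothesis \wPRNp\ applied to $v$ to close the estimate.

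One point in your ``main obstacle'' discussion is mistaken and worth correcting. The ``Caccioppoli step'' you propose is not needed, and the mechanism you describe is not right: the functional $\wt N$ in \eqref{eq.NpC} is \emph{already} the $L^2$-averaged nontangential maximal function $\wt N_{1,2}^{1/2}$ (see \eqref{def.N2}), so the duality statement applies directly to the vector field $\delta\nabla u$ with no intermediate passage through $u-c$ and $\delta F$; in particular the proposed ``split'' of the pairing $\iint\delta\nabla u\cdot G$ into a $u$-part and a $\delta F$-part does not correspond to anything the duality requires and would not be how to organize the argument. The one genuinely needed technical device here, which your plan omits, is a truncation by compact sets: one applies \eqref{eq.NpC} to $\1_K\delta\nabla u$ for $K\Subset\Omega$ so that $\|\wt N(\1_K\delta\nabla u)\|_{L^{p'}}<\infty$ a priori, obtains a bound uniform in $K$, and then lets $K\uparrow\Omega$.
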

\begin{proof}
    Let $  F \in L^\infty_c(\Omega)$ and let $u$ be the energy solution to $L^*u= - \div   F$ in $\om$ with zero Neumann data. We want to prove that 
\[\|\wt N(\delta\nabla u)\|_{L^{p'}(\partial \Omega)} \lesssim \|\wt \C(\delta |  F|)\|_{L^{p'}(\partial \Omega)}.\]

Let $K$ be any compact subset of $\Omega$. In this case $\|\wt N(\1_K \delta\nabla u)\|_{p'} < +\infty$, and so by the duality \eqref{eq.NpC}, and the density of $L^\infty_c(\Omega)$ in the space $\wt T^p_1(\Omega)$, there exists $  G =   G_K \in L^\infty_c(\Omega)$ such that 
\begin{equation} \label{eq.CdG}
    \|\wt \C(\delta   G)\|_{L^{p}(\partial \Omega)} \lesssim 1
\end{equation}
and
\[I := \|\wt N(\1_K \delta\nabla u)\|_{L^{p'}(\partial \Omega)} \leq \iint_\Omega \delta\nabla u\cdot  G\, dXdt=-\iint_{\om}u\divg (\delta  G) dXdt.\]

We let $v\in \dot E(\om)$ be the energy solution to $L v= -\divg(\delta   G)$ in $\Omega$ with zero Neumann data, and the bound on $I$ becomes 
\begin{multline}\label{eq.wPNuLv}
    I \leq \iint_\Omega u\,  Lv \, dX = 
    \iint_\om u\br{-\dr_t v+\divg A\nabla v}dXdt
    =\iint_\Omega \br{\dr_t u\, v - A^T \nabla u\cdot \nabla v }\, dXdt\\
    = \iint_\Omega   F \cdot \nabla v \, dXdt
\end{multline} 
because $u$ is a weak solution to $L^* u = -\div   F$ with zero Neumann data.
By the Carleson inequality \eqref{eq.CNdual}, the assumption that \wPRNp\ holds for $v$, and \eqref{eq.CdG}, we have that
\begin{multline*} 
\|\wt N(\1_K\delta \nabla u)\|_{L^{p'}(\partial \Omega)} = I \lesssim \|\wt \C(\delta   F)\|_{L^{p'}(\partial \Omega)} \|\wt N(\nabla v)\|_{L^{p}(\pom)} \\
\lesssim \|\wt \C(\delta   F)\|_{L^{p'}(\partial \Omega)} 
\|\wt \C(\delta   G)\|_{L^{p}(\partial \Omega)} \lesssim \|\wt \C(\delta   F)\|_{L^{p'}(\partial \Omega)}.
\end{multline*}
Since the bound is independent of the compact $K$, we take $K \uparrow \Omega$ and we have the desired result.
\end{proof}

\begin{lemma}\label{LemmaM}
    Let $p\in(1,\infty)$. Suppose that \wPNq$^{L^*}$ is solvable in $\om=\mathcal O\times\R$.  Let $Q_0:=Q_0'\times(T_0,T_1)$ be a cube centered at $\pom$. Then for any $(\bar x,\bar t)\in Q_0\cap\pom$, any backward parabolic cube $J_r(\bar x,\bar t):=Q_r'(\bar x)\times(\bar t-r^2,\bar t)$ that satisfies $J_{2r}(\bar x,\bar t)\subset Q_0$, and any weak solution $u$ to $Lu=0$ in $Q_0\cap\om$ satisfying $u \in L^2\left((T_0,T_1); W^{1,2}(Q_0'\cap\OO)\right)$, $\dr_t u\in L^2\left((T_0,T_1); W^{-1,2}(Q_0'\cap\OO)\right)$ and having zero Neumann data on $Q_0\cap\pom$, we have
\begin{equation} \label{loc1}
\|\wt N(|\nabla u|\1_{J_r(\bar x,\bar t)})\|_{L^p(\partial \Omega)} \leq C r^{(n+1)/p}\left ( \fiint_{J_{2r}(\bar x,\bar t)\cap \om} |\nabla u|^2 dXdt \right)^\frac12.
\end{equation}
\end{lemma}

\begin{proof}
The proof follows the strategy of \cite[Lemma 4.1]{FL}, adapted to the parabolic setting. Fix $J_r:=J_r(\bar x,\bar t)$ with $(\bar x,\bar t)\in Q_0\cap\pom$  and $J_{2r}:=J_{2r}(\bar x,\bar t)\subset Q_0$. We claim that we can assume that for some $K\geq 1$ (independent of $J_r$ and $u$) to be determined during the proof, $J_{Kr}(\bar x,\bar t)\subset Q_0$, and that the desired estimate \eqref{loc1} would follow once we prove that 
\begin{equation} \label{loc2}
\|\wt N(|\nabla u|\1_{J_r})\|_{L^p(\partial \Omega)} \lesssim r^{(n+1)/p}\Big( \fiint_{J_{Kr} \cap \Omega} |\nabla u|^2 dXdt \Big)^\frac12.
\end{equation} 
In fact, if \eqref{loc2} holds, then 
we cover $J_r$ by a finite collection of backward parabolic cubes $J_{i}:=J_{r_i}(x_i,t_i)$ with $r_i\approx r$ such that 
either 

(i) $(x_i,t_i)\in\pom$ and $J_{Kr_i}(x_i,t_i)\subset J_{2r}$, or 

(ii) $J_{2r_i}(x_i,t_i)\subset J_{2r}\cap\om$.

As there are finitely many $J_i$ and the number depends only on the dimension, we have that 
\[
\|\wt N(|\nabla u|\1_{J_r})\|_{L^p(\partial \Omega)} \lesssim \sup_{i} \|\wt N(|\nabla u|\1_{J_i})\|_{L^p(\partial \Omega)}.
\]
If $J_i$ is as in (i), then we use \eqref{loc2} to get that 
\[
\|\wt N(|\nabla u|\1_{J_i})\|_p\lesssim r_i^{(n+1)/p}\br{\fiint_{J_{Kr_i}(x_i,r_i) \cap\om}\abs{\nabla u}^2}^{1/2}\lesssim r^{(n+1)/p}\br{\fiint_{J_{2r}\cap\om}\abs{\nabla u}^2}^{1/2}.
\]
If $J_i$ is as in (ii), then we observe that $\wt N(|\nabla u|\1_{J_i})$ is supported on a large shadow of $J_i$ on $\pom$, which is of measure $Cr^{n+1}$. Therefore,
\[ \|\wt N(|\nabla u|\1_{J_i})\|_{L^p(\partial \Omega)} \approx r^{(n+1)/p} \left(\fiint_{J_i} |\nabla u|^2 \, dX \right)^\frac12 \approx r^{(n+1)/p} \left(\fiint_{J_{2r}\cap\om} |\nabla u|^2 \right)^\frac12.\]
Combining both cases, we have shown \[
\|\wt N(|\nabla u|\1_{J_r})\|_{L^p(\partial \Omega)} \leq C r^{(n+1)/p}\left ( \fiint_{J_{2r}\cap \om} |\nabla u|^2 dXdt \right)^\frac12,
\] as desired. 
\smallskip

 We choose a piecewise smooth function $\vp$ so that $\varphi \equiv 1$ on $J_{\frac32r}$, $\varphi \equiv 0$ outside $J_{2r}$, $0\le \vp\le 1$,  $|\nabla \varphi| \lesssim \frac1r$ and that $|\dr_t\varphi|\lesssim\frac{1}{r^2}$ in $J_{2r}$. 
By duality, i.e. \eqref{eq.NpC}, there exists $  G \in L^\infty_c(\Omega,\Rn)$ such that
\begin{equation} \label{locG}
    \|\wt \C(\delta   G)\|_{L^{p'}(\partial \Omega)} \leq 1
\end{equation}
and
\begin{equation} \label{loc3}
    \|\wt N(\nabla u \1_{J_r})\|_{L^p(\partial \Omega)} \leq C \iint_\Omega \1_{J_r}\nabla u \cdot   G\, dXdt
\end{equation}
Let $c_u$ be a constant to be determined later. We have that
\begin{multline*}
    \|\wt N(\nabla u \1_{J_r})\|_{L^p(\partial \Omega)} \lesssim \iint_\Omega  \1_{J_r}\nabla u \cdot   G\, \varphi \, dXdt 
    =\iint_\Omega  \1_{J_r}\nabla(u-c_u) \cdot   G\, \varphi \, dXdt\\
    = \int_{T_0}^{\bar t}\int_{\OO}\1_{J_r} \nabla\br{(u-c_u) \varphi }\cdot  G \, dXdt- \iint_{J_r\cap\om} (u-c_u) \nabla \varphi \cdot   G \, dXdt\\
    =\int_{T_0}^{\bar t}\int_{\OO}\1_{J_r} \nabla\br{(u-c_u) \varphi }\cdot  G \, dXdt=: I_0,
\end{multline*}
where we have used the observation that $ \iint_{J_r\cap\om} (u-c_u) \nabla \varphi \cdot   G \, dXdt=0$ since $\nabla\vp =0$ in $J_{\frac32r}\supset J_{r}$.

To treat the term $I_0$, we define $v\in \dot E$ to be the energy solution to $L^*v= - \divg   (G\1_{J_r})$ in $\Omega$ with zero Neumann data. We take $(u-c_u)\vp$ as a test function in the PDE satisfied by $v$---which is valid since  $(u-c_u)\vp\in L^2(\R,\W^{1,2}(Q_0'\cap\OO))$; see also \eqref{eq.wksol_cutoff} and the remark below---and integrate in time from $T_0$ to $\bar t$ to get that\footnote{We shall be somewhat imprecise regarding the domain of integration in the spatial variables: we write the integral over $\OO$, although $u$ is only defined on $Q_0 \cap \OO$. This causes no problem, since we may extend $u$ arbitrarily outside $Q_0$, as the cutoff function $\varphi(\cdot, t)$ is compactly supported in $J_{2r} \cap \OO\subset Q_0\cap\OO$.
}  
\begin{multline*}
I_0 
= \int_{T_0}^{\bar t} \langle\dr_t v(\cdot,t), (u(\cdot,t)-c_u)\vp(\cdot,t)\rangle_{W^{-1,2},\W^{1,2}}dt
 -\int_{T_0}^{\bar t}\int_{\OO} A^T\nabla v\cdot\nabla[(u-c_u)\varphi]dXdt\\ 
=  \int_{T_0}^{\bar t}\langle\dr_t v(\cdot,t), (u(\cdot,t)-c_u)\vp(\cdot,t)\rangle_{W^{-1,2},\W^{1,2}}dt
-\int_{T_0}^{\bar t}\int_{\OO} A \nabla [(u-c_u)\varphi] \cdot \nabla v \, dXdt.
\end{multline*}
We want to use the PDE satisfied by $u-c_u$, and so we further rewrite the second term using the product rule:
\begin{multline*}
    -\int_{T_0}^{\bar t}\int_{\OO} A \nabla [(u-c_u)\varphi] \cdot \nabla v \, dXdt
    =-\int_{T_0}^{\bar t}\int_{\OO} A\nabla (u-c_u)\cdot\nabla(\vp\, v)dXdt\\ 
+\int_{T_0}^{\bar t}\int_{\OO} A\nabla (u-c_u)\cdot\nabla\vp\,v\,dXdt
-\int_{T_0}^{\bar t}\int_{\OO} A\nabla\vp\cdot\nabla v\,(u-c_u)dXdt.
\end{multline*}
For the first term coming from $I_0$, we claim that 
\begin{multline}\label{eq.pairID}
     \int_{T_0}^{\bar t} \langle\dr_t v(\cdot,t), (u(\cdot,t)-c_u)\vp(\cdot,t)\rangle_{\W^{-1,2},\W^{1,2}}dt\\
 =-\int_{T_0}^{\bar t} \langle\dr_t(u-c_u), v(\cdot,t)\vp(\cdot,t)\rangle_{\W^{-1,2},\W^{1,2}}dt
 -\int_{T_0}^{\bar t}\int_{\OO} (u-c_u)v\,\dr_t\vp\,dXdt.
\end{multline}
In fact, by a density argument, one can assume that  $u,v\in C^\infty(\OO\times (T_0,T_1))$ and then the pairing of $\W^{-1,2}(\OO)$ and $\W^{1,2}(\OO)$ becomes inner product in $L^2(\OO)$. In addition, observe that since $v$ is a solution to $L^*v=0$ in $\OO\times [\bar t,\infty)$ with 0 Neumann data, $v$ is constant in $\set{t\ge \bar t}$ (notice that we use here that $L^*$ is backward in time parabolic PDE). Without loss of generality, we can assume that the constant is 0, that is, 
\begin{equation}\label{eq.v=0}
    v(X,t)=0 \qquad\text{for $t\ge \bar t$}.
\end{equation}
Therefore, integrating by parts in $t$, the left-hand side of \eqref{eq.pairID} can be written as 
\begin{multline*}
     \int_{T_0}^{\bar t}\int_{\OO}\dr_tv(u-c_u)\vp\,dXdt\\
     =\int_{\OO}\int_{T_0}^{\bar t}\dr_t\br{v(u-c_u)\vp}dtdX
     -\int_{\OO}\int_{T_0}^{\bar t}\dr_t(u-c_u)v\,\vp\,dtdX
     -\int_{T_0}^{\bar t}\int_{\OO} (u-c_u)v\,\dr_t\vp\,dXdt\\
     =-\int_{T_0}^{\bar t}\int_{\OO}\dr_t(u-c_u)v\,\vp\,dtdX
     -\int_{T_0}^{\bar t}\int_{\OO} (u-c_u)v\,\dr_t\vp\,dXdt
\end{multline*}
thanks to \eqref{eq.v=0}.

Since $u-c_u$ satisfies $Lu = 0$ in $Q_0\cap \Omega$ with zero Neumann data on $\pom\cap Q_0$ (recall that $J_{Kr}\subset Q_0$), 
we take  $\vp\,v$ as a test function and integrate in time from $T_0$ to $\bar t$ to get that
\begin{equation*}
   \int_{T_0}^{\bar t} \langle\dr_t(u-c_u), v(\cdot,t)\vp(\cdot,t)) \rangle_{\W^{-1,2},\W^{1,2}}dt+ \int_{T_0}^{\bar t} \int_{\OO}A\nabla (u-c_u)\cdot\nabla(\vp \,v)dXdt =0.
\end{equation*}
Therefore, 
\begin{multline}\label{eq.I0split}
I_0
=  -\int_{T_0}^{\bar t}\int_{\OO} (u-c_u)v\,\dr_t\vp\,dXdt
+\int_{T_0}^{\bar t}\int_{\OO} A\nabla u\cdot\nabla\vp\, v\,dXdt\\
-\int_{T_0}^{\bar t}\int_{\OO}  A\nabla\vp\cdot\nabla v(u-c_u)dXdt
=:I_{00}+I_{01}+I_{02}.
\end{multline}

To estimate these terms, we first derive the following estimate: let $Q:=Q_{r}(\bar x,\bar t)$ and $kQ:=Q_{kr}(\bar x,\bar t)$ for $k>0$, 
\begin{equation}\label{eq.v-Nv}
    \fiint_{4Q\cap\om}|v|dXdt\lesssim r^{-n-1}
    \br{\|\wt N(\delta|\nabla v|\1_{4KQ})\|_{L^1(\pom)}+\|\wt\C(\delta   G\1_{4KQ})\|_{L^1(\pom)}}.
\end{equation}

To prove \eqref{eq.v-Nv}, let $\mathcal W$ be a Whitney decomposition of $\om$ into parabolic cubes. Then 
\begin{multline}\label{eq.v-cv.sum}
\iint_{4Q\cap\om}|v|dXdt
\lesssim\int_{(y,s)\in\pom}\iint_{\Gamma(y,s)}|v(X,t)|\1_{4Q}(X,t)\frac{dXdt}{\delta(X,t)^{n+1}}d\sigma(y,s)\\
     \lesssim \int_{(y,s)\in\pom}\sum_{W\in\mathcal W,\, (y,s)\in 10 W}\ell(W)\fiint_{W}|v(X,t)|\1_{4Q}(X,t)dXdt\,d\sigma(y,s).
\end{multline}
Let $W_Q\in\mathcal W$ be a parabolic Whitney cube associated to $Q$, that is, $W_Q\subset Q$, $10W_Q\cap\pom\neq\emptyset$, and $\delta(W_Q,\pom):=\dist_p(W_Q,\pom)\gtrsim r$. Moreover, we choose $W_Q\subset \set{t>\bar t}$, and so $\fiint_{W_Q}v\,dXdt=0$ in view of \eqref{eq.v=0}.

Fix any $W\in\mathcal W$ such that $W\cap 4Q \neq \emptyset$. We construct a Harnack chain of Whitney cubes $W=W_1,\dots,W_{M}=W_Q$. Such a Harnack chain satisfies 
\begin{enumerate}
    \item $M \lesssim 1 + \ln(r/\ell(W))$;
    \item for any $1\leq i \leq M$ we have $2W_i \in \gamma^*(y,s)$ for any $(y,s)\in 10W \cap \partial \Omega$ and any parabolic cone $\gamma^*(y,s)$ with large enough aperture;
    \item $W_i \subset 4KQ$ for $K$ large enough (as $W \cap 4Q \neq \emptyset$). 
\end{enumerate}

By the triangle inequality, we have (recall that $0=\fiint_{W_Q}v\,dXdt=(v)_{W_M}$) 
\begin{equation}\label{eq.v-cv.sp}
   \fiint_{W}|v(X,t)|\1_{4Q}(X,t)dXdt\le \fiint_W|v(X,t)-(v)_W|\1_{4Q}dXdt+\sum_{i=2}^M|(v)_{W_{i-1}}-(v)_{W_i}|, 
\end{equation}
where we have used the notation $(v)_Q:=\fiint_Q v\,dXdt$. By Lemma~\ref{lem.Pnc_sol}, 
\begin{multline}\label{eq.v-cv.W}
    \fiint_W|v(X,t)-(v)_W|\1_{4Q}dXdt\lesssim\ell(W)\fiint_{2W\cap\om}(|\nabla v|+|  G|)\1_{4Q}dXdt\\
 \lesssim \wt N(\delta|\nabla v|\1_{4Q})(y,s)+ \wt\C(\delta|  G|\1_{4Q})(y,s)
\end{multline}
thanks to property (2) above. For each $i\in\set{2,\cdots,M}$, since $W_{i-1}$ and $W_i$ are adjacent in the Harnack chain, we can find a parabolic cube $S_i\subset W_i\cup W_{i-1}$ such that both $S_i\cap W_{i-1}$ and $S_i\cap W_i$ have measures comparable to $\ell(W_i)^{n+2}$. By the triangle inequality, 
\begin{multline*}
     \abs{(v)_{W_{i-1}}-(v)_{W_i}}\\
    \le \abs{(v)_{W_{i-1}}-(v)_{S_i\cap W_{i-1}}}+ \abs{(v)_{S_i\cap W_{i-1}} -(v)_{S_i}}
    +\abs{(v)_{S_i}-(v)_{S_i\cap W_{i}}}+\abs{(v)_{S_i\cap W_i}-(v)_{W_i}}.
\end{multline*}
We apply Lemma~\ref{lem.Pnc_sol} to each term, for example, 
\begin{multline*}
    \abs{(v)_{W_{i-1}}-(v)_{S_i\cap W_{i-1}}}
    =\abs{\fiint_{S_i\cap W_{i-1}}(v-(v)_{W_{i-1}})dXdt}
    \lesssim \fiint_{W_{i-1}}|v-(v)_{W_{i-1}}|dXdt\\
    \lesssim\ell(W_{i-1})\fiint_{2W_{i-1}}(|\nabla v|+|  G|)dXdt,
\end{multline*}
and we obtain that 
\[
 \abs{(v)_{W_{i-1}}-(v)_{W_i}}\lesssim \fiint_{2W_{i-1}}(|\nabla v|+|  G|)dXdt +\fiint_{2W_{i}}(|\nabla v|+|  G|)dXdt.
\]
By properties (2) and (3) above, we have that 
\[
\sum_{i=2}^N|(v)_{W_{i-1}}-(v)_{W_i}|\lesssim M\br{\wt N(\delta |\nabla v|\1_{4KQ})(y,s)+\wt\C(\delta  G\1_{4KQ})(y,s)}.
\]
This together with \eqref{eq.v-cv.sp},\eqref{eq.v-cv.W}, and property (1) above, allows us to conclude that 
\[
 \fiint_{W}|v(X,t)|\1_{4Q}(X,t)dXdt\lesssim \ln\br{\frac{r}{\ell(W)}}\br{\wt N(\delta |\nabla v|\1_{4KQ})(y,s)+\wt\C(\delta  G\1_{4KQ})(y,s)}.
\]
We now return to \eqref{eq.v-cv.sum}. Observe that for given $(y,s)\in\pom$, there is a uniformly finite number of $W\in\mathcal W$ satisfying $(y,s)\in 10 W$ and $\ell(W)=2^{-k}r$ for each $k\in\mathbb{Z}$. The condition $W\cap 4Q\neq\emptyset$ entails in additional that $k\ge -4$ for those $W$.  Therefore, 
\[
\sum_{\substack{W\in \mathcal W, (y,s)\in 10W\\ W\cap 4Q\neq\emptyset}}\ell(W)\ln\br{\frac{r}{\ell(W)}}\lesssim\sum_{k=-4}^\infty k2^{-k}r\lesssim r,
\]
which gives that 
\[
\iint_{4Q\cap\om}|v|dXdt \lesssim r\int_{(y,s)\in\pom}\br{\wt N(\delta |\nabla v|\1_{4KQ})(y,s)+\wt\C(\delta  G\1_{4KQ})(y,s)} d\sigma(y,s),
\]
and hence \eqref{eq.v-Nv} follows.

Some additional observations are needed for estimating the 3 terms from \eqref{eq.I0split}. Notice that all these terms involve derivatives of $\vp$, and so the regions of integrals are all contained in $J_{2r}\cap\om\setminus J_{\frac32r}$. 
We cover the region $J_{2r}\cap\om\setminus J_{\frac32r}$ by a finite collection of parabolic cubes $Q_i$ of sidelength $\ell(Q_i)\approx r$ such that 
\begin{enumerate}[(i)]
    \item either $Q_i$ is centered at $\pom$ and $4Q_i'\cap J_r=\emptyset$,
    \item or $4Q_i\subset \frac52Q\cap\om\setminus J_r$.
\end{enumerate}
Note that $v$ satisfies the homogeneous equation $L^*v=0$ in $\om\setminus J_{r}$ and has 0 Neumann data on $\pom$. Applying Lemma~\ref{lem.MoserNeu} to $v$ on $Q_i$ of type $(i)$ and applying the usual interior Moser estimate on $Q_i$ of type $(ii)$, we get that 
\begin{equation}\label{eq.vL2}
    \br{\fiint_{J_{2r}\cap\om\setminus J_{\frac32r}}|v|^2dXdt}^{1/2}\le \sup_{J_{2r}\cap\om\setminus J_{\frac32r}}|v|\lesssim r^{-n-2}\iint_{4Q\cap\om\setminus J_r}|v|dXdt.
\end{equation}
Combining \eqref{eq.vL2} and \eqref{eq.v-Nv}, one has that 
\begin{multline}\label{eq.vL2-fnl}
     \br{\fiint_{J_{2r}\cap\om\setminus J_{\frac32 r}}|v|^2dXdt}^{1/2}\lesssim r^{-n-1}\br{\|\wt N(\delta|\nabla v|\1_{4KQ})\|_{L^1(\pom)}+\|\wt\C(\delta   G\1_{4KQ})\|_{L^1(\pom)}}
     \\\lesssim r^{-\frac{n+1}{p'}}\br{\|\wt N(\delta|\nabla v|\1_{4KQ})\|_{L^{p'}(\pom)}+\|\wt\C(\delta   G\1_{4KQ})\|_{L^{p'}(\pom)}}\lesssim r^{-\frac{n+1}{p'}},
\end{multline}
where we have used in the last inequality the assumption that \wPNq$^{L^*}$ holds and \eqref{locG}.
\medskip

Now we are ready to estimate the 3 terms from \eqref{eq.I0split}. 
We choose $c_u=\fiint_{J_{8r}\cap \om}u\,dXdt$. 
Since $u-c_u$ is also a solution to $Lu=0$ in $Q_0\cap\om$ with zero Neumann data on $Q_0\cap \pom$, we have by Lemma~\ref{lem.MoserNeu} and  Lemma~\ref{lem.Pnc_sol} (recall $J_{Kr}\subset Q_0$) that 
\begin{equation}\label{eq.supu-cu.1}
    \sup_{J_{2r}\cap\om}|u-c_u|\le C\fiint_{J_{8r}\cap\om}|u-c_u|dXdt\le C\,r\fiint_{J_{16r}\cap\om}|\nabla u|dXdt.
\end{equation}
For $I_{00}$, we use  \eqref{eq.supu-cu.1}, \eqref{eq.vL2-fnl}, and the bound $|\dr_t\vp|\lesssim r^{-2}$ in $J_{2r}$, to obtain that 
\[
|I_{00}|\lesssim r^{-2}\sup_{J_{2r}\cap\om}|u-c_u|\iint_{J_{2r}\cap\om\setminus J_{\frac32r}}|v|dXdt\lesssim r^{-\frac{n+1}{p}}\fiint_{J_{16r}\cap\om}|\nabla u|dXdt.
\]
For $I_{01}$, we use H\"older inequality, \eqref{eq.vL2-fnl} and the bound on $|\nabla\vp|$ to obtain that 
\[
|I_{01}|\lesssim  r^{-\frac{n+1}{p}}\br{\fiint_{J_{2r}\cap\om}|\nabla u|^2dXdt}^{1/2}.
\]
Finally, for $I_{02}$, we use \eqref{eq.supu-cu.1}, the bound on $|\nabla\vp|$, Lemma~\ref{L:Caccio} (as well as the decomposition of the region $J_{2r}\cap\om\setminus J_{\frac2r}$ into parabolic cubes of types $(i)$ and $(ii)$ as before), and \eqref{eq.vL2-fnl} to get that 
\[
|I_{02}|\lesssim r^{n+2}\fiint_{J_{16r}\cap\om}|\nabla u|dXdt \iint_{J_{2r}\cap\om\setminus J_{\frac32r}}|\nabla v|dXdt\lesssim r^{-\frac{n+1}{p}}\fiint_{J_{16r}\cap\om}|\nabla u|dXdt.
\]
The estimate \eqref{loc2} follows as desired, and we can take $K=16$.
\end{proof}

We let the reader check that Theorem~\ref{thm.NtoLoc} follows from Lemmas~\ref{lem.N+D=wPNR}, \ref{lem.wPNR=wPN}, and \ref{LemmaM}.

\section{Proof of Theorem~\ref{MT}}\label{S5}

In this section, we present an application of the localization estimate that we have established in Theorem~\ref{thm.NtoLoc} and prove Theorem \ref{MT}.
Let 
$$\mathcal O=\{(x,x_n): x_n>\varphi(x)\},\qquad\mbox{for }x\in\R^{n-1},$$
where $\varphi$ is a Lipschitz function in variables $x$. As before $\Omega=\mathcal O\times\R$.

We assume that, for some $p\in(1,\infty)$, both \Np$^L$ $ and $ \Dq$^{L^*}$ are solvable. 
We shall establish the end-point $p=1$ Hardy space bound of the form
\begin{equation}\label{AE}
\|\tilde N(\nabla u)\|_{L^1(\partial\Omega)}\le C,
\end{equation}
for all $u$ that solve $Lu=0$ in $\Omega$ with Neumann boundary data $g$, where $g$ is any
$L^\infty$ atom, i.e.,  
$$\mbox{supp }g\subset Q_r(X,t)\cap \partial\Omega\quad\mbox{for some $(X,t)\in \partial\Omega$ and $r>0$ },
\|g\|_{L^\infty}\le r^{-n-1},\quad\int_{\partial\Omega}g=0.$$
This bound implies solvability of the Neumann boundary value problem for $p=1$ with Hardy space data. Then by interpolation we get for all $1<q<p$ solvability of the \Nq$^L$, which proves Theorem \ref{MT}. Here we interpolate the sublinear functional
$$T:g\mapsto \tilde N(\nabla u),$$
using the real interpolation method. Thanks to \eqref{AE} we know that $T$ is bounded from $\hbar^1_{ato}(\partial\Omega)\to L^1(\partial\Omega)$, where $\hbar^1_{ato}(\partial\Omega)$ is the atomic
Hardy space consisting of functions of the form $\sum_i\lambda_ig_i$ such that $\sum_i|\lambda_i|<\infty$ and each $g_i$ is an $L^\infty$ atom as defined above. $T$ is also bounded from $L^p(\partial\Omega)\to L^p(\partial\Omega)$ since we assume that \Np$^L$ is solvable. It then follows that $T$ is bounded as a functional on $L^q(\partial\Omega)$ for all $1<q<p$.
\medskip

Following Brown \cite{B}, 
by rescaling and translation, we may assume that the atom $g$ is supported in a parabolic boundary cube
$Q_1(0,0)\cap \partial\Omega$ and therefore has zero average and $|g|\le 1$. 
Let $u$ be the energy solution of the Neumann boundary value problem in $\Omega$ with datum $g$.
We aim to prove \eqref{AE} for some $C>0$ independent of $g$. 

The assumption of solvability we have made implies that for some $p>1$ we have the bound \eqref{loc1}
on the portion of boundary where $g$ vanishes on $2Q\cap \partial\Omega$ as follows from Theorem~\ref{thm.NtoLoc}. In particular $g$ vanishes for all $t<-1$ and therefore $u$ is a constant for all $t<-1$. Without loss of generality we may assume that this constant is zero. 
Because $\partial^A_\nu u=0$ on the portion of boundary $\partial\Omega\setminus \overline{Q_1(0,0)}$,
we may consider an even reflection $\tilde u$ across the boundary defined by \eqref{eqdef.reflec},
which extends $u$ to the set $\R^{n+1}\setminus \overline{Q_1(0,0)}$. This extension $\tilde u$ solves on this domain a parabolic PDE
$$\partial_t\tilde u-\divg(\tilde A\nabla\tilde u)=0,$$
where $\tilde A=A$ on $\Omega$ and the coefficient matrix $\tilde A$ on the set $x_n<\varphi(x)$ depends only on $A(x,2\varphi(x)-x_n,t)$ and $\nabla\varphi$, making the resulting matrix bounded and uniformly elliptic on the set $\R^{n+1}\setminus \overline{Q_1(0,0)}$.
We claim that $\tilde u$ satisfies the bound
\begin{equation}\label{decay}
|\tilde u(X,t)|\le \|(X,t)\|^{-n-\eta},\qquad \mbox{for all $(X,t)$ with }\|(X,t)\|\ge 10,
\end{equation}
for some small $\eta>0$. To prove this we first establish that $\tilde u$ is bounded away from $Q_1(0,0)$.
This follows from the assumption that the $L^p$ Neumann problem is solvable. In particular we have that
$\|\tilde{N}(\nabla u)\|_{L^p(\partial\Omega)}<\infty$. \medskip

Let  $(X,t)$ be any point such that $J_1(X,t)\cap Q_1(0,0)=\emptyset$.  We claim that then we have
\begin{equation}\label{eq5.3}
\fiint_{J_{3/4}(X,t)}|\tilde u|\le C \|\tilde{N}(\nabla u)\|_{L^p(\partial\Omega)},
\end{equation}

and hence by interior H\"older regularity of $\tilde u$
\begin{equation}\label{eq5.3a}
\sup_{J_{1/2}(X,t)}|\tilde u| \le C \|\tilde{N}(\nabla u)\|_{L^p(\partial\Omega)}.
\end{equation}
To see \eqref{eq5.3} consider points with $\delta(X,t)\le O(1)$ and $t\le O(1)$. For these clearly
\begin{equation}\label{eq5.4}
\fiint_{J_{1}(X,t)}|\nabla \tilde u|\lesssim \fint_{Q_1(x,\varphi(x))\times(t-1,t)\cap \partial\Omega}\tilde N(\nabla u)d\sigma\le C \|\tilde{N}(\nabla u)\|_{L^p(\partial\Omega)},
\end{equation}
when $p>1$. Then by interior Poincar\'e's inequality for $\tilde u$ (Lemma \ref{lem.Pnc_sol})
$$\fiint_{J_{3/4}(X,t)}|\tilde u-c_{\tilde{u}}|\le C \|\tilde{N}(\nabla u)\|_{L^p(\partial\Omega)},$$
where $c_{\tilde{u}}$ denotes the average of $\tilde{u}$ in $J_{3/4}(X,t)$. Clearly, if we can take $c_{\tilde{u}}=0$ the claim would follow. We know however that this holds for $t<-1$. We claim that simple geometric considerations imply that there exist $N=N(n)$ and a chain of regions $J_1(X_1,t_1), J_1(X_2,t_2)$,$\dots$, $J_1(X_k,t_k)$
with $k\le N$ such that
\begin{itemize}
\item regions $J_1(X_i,t_i)$, $J_1(X_{i+1},t_{i+1})$ overlap but 
$J_{3/4}(X_i,t_i)$, $J_{3/4}(X_{i+1},t_{i+1})$ do not for $i=1,2,\dots,k-1$.
\item $J_1(X_i,t_i)\cap Q_1(0,0)=\emptyset$ for $1\le i\le k$,
\item either $t_i=t_{i+1}$ or $X_i=X_{i+1}$ for each $i=1,2,\dots,k-1$,
\item $t_1<-2$ and $(X_k,t_k)=(X,t)$.
\end{itemize}
The last condition implies that the average of $\tilde u$ over $J_{3/4}(X_1,t_1)$ vanishes. 
We claim that the third condition implies that the difference of averages of $\tilde u$ between
$J_{3/4}(X_i,t_i)$, $J_{3/4}(X_{i+1},t_{i+1})$ can be controlled in terms of an integral of $|\nabla\tilde{u}|$
over the convex hull of the union of $J_1(X_i,t_i)$ and $J_1(X_{i+1},t_{i+1})$ and hence by considerations
analogous to \eqref{eq5.4} ultimately again by $C \|\tilde{N}(\nabla u)\|_{L^p(\partial\Omega)}$.
Chaining the difference of averages implies a bound for the average of $\tilde u$ over 
$J_{3/4}(X_k,t_k)=J_{3/4}(X,t)$ by at most $CN \|\tilde{N}(\nabla u)\|_{L^p(\partial\Omega)}$, hence proving our claim.

To see that the difference of averages of $\tilde u$ between
$J_{3/4}(X_i,t_i)$, $J_{3/4}(X_{i+1},t_{i+1})$ can be controlled in terms of an integral of $|\nabla\tilde{u}|$
consider first the case when $t_i=t_{i+1}$. Then the statement is trivial and follows by expressing the difference
of two values of $\tilde u$ at two points in each regions with the same time coordinate by the fundamental theorem of calculus by integrating $\nabla\tilde{u}$ along the line segment joining these two points.

When $X_i=X_{i+1}$ but $t_i\ne t_{i+1}$ the argument is a bit more involved and requires the use of the PDE
$\tilde{u}$ satisfies. This is done in full detail in \cite{DG} (see the calculation after (6.23) estimating the average on two parabolic cubes in term of $\nabla u$. We use the same idea in the proof of Lemma \ref{lem.Pnc_sol}, see \eqref{eq.avgt-t'} and the subsequent calculation).  Hence, indeed \eqref{eq5.3a} holds
for all points with $\delta(X,t)\le O(1)$ and $t\le O(1)$. Boundedness for the remaining points (i.e. those where
either $\delta(X,t)$ is large or $t$ is large) follows by the maximum principle (which must hold even though
our domain is unbounded in the $x_n$ variable, since the fact that $\tilde{u}$ is an energy solution implies a weak decay of $\tilde{u}$ when $|x_n|\to\infty$). Hence, $\tilde{u}$ is bounded on any complement of an enlargement of $Q_1(0,0)$.\medskip

Consider now a smooth cutoff function $\eta$ which vanishes on $Q_1(0,0)$ and is equal to $1$ outside of $Q_2(0,0)$. It follows that $\eta\tilde{u}$ is bounded on $\R^n\times\R$ and hence for any $c>0$ and any bounded time interval $(a,b)$ we have that
$$\int_a^b\iint_{\R^n}|(\eta\tilde{u})(X,t)|^2e^{-c|X|^2}dX\,dt<\infty,$$
which then implies the integrability also on the interval $(-\infty,b)$ as $\tilde u$ vanishes when $t<-1$.
Recall  Aronson’s bound for the fundamental 
solution (\cite{Aro68}), denoted by $E(X,t,Y,s)$:
\begin{equation}\label{KE}
|E(X,t,Y,s)|\le C\chi_{(0,\infty)}(t-s)(t-s)^{-n/2}e^{-c|X-Y|^2/(t-s)},
\end{equation}
for some $c>0$. Also $\int_{\R^n} E(X,t,Y,s)dY=1$ for all $s\in\R$ and $(X,t)\in\R^n\times\R$.

It follows that $(\eta\tilde u)$ can be written as
\begin{multline*}
\eta\tilde u(X,t)=\iint_{\R^n\times\R}E(X,t,Y,s)\left[\tilde u(Y,s)\partial_s\eta(Y,s)-\langle \tilde A(Y,s)\nabla\tilde u(Y,s),\nabla\eta(Y,s)\rangle \right]dYds\\+\iint_{\R^n\times\R}
\tilde u(Y,s)\langle \tilde A(Y,s)\nabla \eta(Y,s),\nabla_YE(X,t,Y,s)\rangle dYds.
\end{multline*}
Hence using \eqref{KE} and Caccioppoli's inequality for the term involving $\nabla E$, it follows that for some small $c>0$
$$|\tilde u(X,t)|\le Ce^{-c|X|^2},\qquad \mbox{for all $(X,t)$ with }\|(X,t)\|\ge 4\mbox{ and }t<20.$$
Using the divergence theorem we get at the time $t=10$ that 

\begin{multline*}
\int_{\mathcal O} u(X,10)dX=\int_{\mathcal O} [u(X,10)-u(X,-1)]dX=\int_{-1}^{10}\int_{\mathcal O}\partial_t u(X,t)dXdt\\=
\int_{-1}^{10}\int_{\mathcal O}\divg(A\nabla u(X,t))dXdt=
\int_{-1}^{10}\int_{\partial\mathcal O}\partial^A_\nu u(Y,s)  d\sigma ds=\int_{\partial\Omega}g=0,    
\end{multline*}
from which we deduce  $\int_{\R^n} \tilde u(X,10)dX=0$, as the extension $\tilde u$ is an even reflection of $u$.
From this the claim \eqref{decay} follows by applying the following lemma after substituting $t\mapsto t+10$ (c.f. Lemma 3.13 of \cite{B}):

\begin{lemma}\label{lem.decay} Suppose $f:\R^n\to\R$ satisfies for some small $\eta>0$ the decay estimate and the averaging condition
$$|f(X)|\le (1+|X|)^{-n-2\eta}\qquad\mbox{and} \qquad \int_{\R^n}f(X)dX=0.$$
Then the solution of the initial value problem 
$$\partial_t u-\divg(\tilde A\nabla u)=0\qquad\mbox{in }\R^n\times(0,\infty),\qquad u(\cdot,0)=f(\cdot),$$
satisfies $|u(X,t)|\le (1+\|(X,t)\|)^{-n-\eta}$.
\end{lemma}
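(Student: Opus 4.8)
The plan is to use the fundamental solution and to convert the hypothesis $\int f=0$ into an extra decay gain via interior regularity of the fundamental solution. Let $E=E(X,t,Y,s)$ denote the fundamental solution of $\partial_t-\divg(\tilde A\nabla\cdot)$, so that $u(X,t)=\int_{\Rn}E(X,t,Y,0)f(Y)\,dY$ for $t>0$ (the integral converges absolutely by the decay of $f$). I would rely on three standard facts: Aronson's Gaussian upper bound \eqref{KE}; conservation of mass $\int_{\Rn}E(X,t,Y,s)\,dY=1$, hence $\int_{\Rn}|E(X,t,Y,s)|\,dY\lesssim1$; and the interior De Giorgi--Nash--Moser H\"older estimate for $E(X,t,\cdot,\cdot)$, which solves the adjoint equation on $\{s<t\}$ and is therefore, at the natural scale $\sqrt t$ around the non-singular point $(0,0)$, H\"older continuous with some exponent $\alpha_0=\alpha_0(n,\lambda,\Lambda)\in(0,1)$. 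Combining the H\"older estimate with \eqref{KE} on $Q_{\sqrt t/2}(0,0)$ (where $t-s\approx t$ and $|X-Y'|\ge|X|-\sqrt t/2$) gives
\[
|E(X,t,Y,0)-E(X,t,0,0)|\lesssim\Big(\frac{|Y|}{\sqrt t}\Big)^{\alpha_0}t^{-n/2}e^{-c|X|^2/t}\qquad\text{for }|Y|\le\tfrac{\sqrt t}{4}.
\]
I will freely assume $2\eta<\alpha_0$; this is harmless in the application, where $f$ even has Gaussian decay and any small $\eta>0$ may be used.

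Using $\int_{\Rn}f=0$ I would write $u(X,t)=\int_{\Rn}[E(X,t,Y,0)-E(X,t,0,0)]f(Y)\,dY$. For $\|(X,t)\|$ bounded the estimate is trivial, since $|u(X,t)|\le\|f\|_\infty\int_{\Rn}|E(X,t,Y,0)|\,dY\lesssim1$; so it suffices to treat $\|(X,t)\|$ large, and for that the only elementary input is $\sup_{t>0}t^{-a}e^{-c|X|^2/t}\lesssim_a|X|^{-2a}$, which yields $t^{-a}e^{-c|X|^2/t}\lesssim\min(t^{-a},|X|^{-2a})\lesssim\|(X,t)\|^{-2a}$ once $\|(X,t)\|$ is large. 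On the inner region $\{|Y|\le\sqrt t/4\}$ I insert the H\"older bound above together with $|f(Y)|\le(1+|Y|)^{-n-2\eta}$; since $\int_{|Y|\le\sqrt t/4}|Y|^{\alpha_0}(1+|Y|)^{-n-2\eta}\,dY\lesssim t^{(\alpha_0-2\eta)/2}$, this part contributes $\lesssim t^{-(n+2\eta)/2}e^{-c|X|^2/t}\lesssim\|(X,t)\|^{-n-2\eta}$. On the outer region $\{|Y|>\sqrt t/4\}$ I bound $|E(X,t,Y,0)-E(X,t,0,0)|\le|E(X,t,Y,0)|+|E(X,t,0,0)|$ and use \eqref{KE}: the $E(X,t,0,0)$-term contributes $\lesssim t^{-n/2}e^{-c|X|^2/t}\int_{|Y|>\sqrt t/4}(1+|Y|)^{-n-2\eta}\,dY\lesssim t^{-n/2}e^{-c|X|^2/t}(1+\sqrt t)^{-2\eta}\lesssim\|(X,t)\|^{-n-2\eta}$ (distinguishing $\sqrt t\ge1$ and $\sqrt t<1$, the latter forcing $|X|$ large because $\|(X,t)\|$ is large); for the $E(X,t,Y,0)$-term I split at $|Y|=|X|/2$: if $|Y|\le|X|/2$ then $|X-Y|\ge|X|/2$ and the same bound applies, while if $|Y|>|X|/2$ then $(1+|Y|)^{-n-2\eta}\lesssim\|(X,t)\|^{-n-2\eta}$ and $\int_{\Rn}|E(X,t,Y,0)|\,dY\lesssim1$ finishes it. Summing up, $|u(X,t)|\lesssim\|(X,t)\|^{-n-2\eta}$, which gives the asserted bound.

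The main obstacle is conceptual rather than computational: Aronson's pointwise bound alone only produces decay $\|(X,t)\|^{-n}$, so the extra $\|(X,t)\|^{-\eta}$ has to be extracted from the cancellation $\int f=0$. The device for this is the interior H\"older continuity of $E(X,t,\cdot,\cdot)$ near the initial time $s=0$ — I need to know that at time $0$, which sits at a positive distance from the singular time $t$, the fundamental solution is a genuine weak solution of a uniformly parabolic equation, regular at scale $\sqrt t$, with a sup bound controlled by Aronson's estimate. Once this H\"older gain $(|Y|/\sqrt t)^{\alpha_0}$ is in hand, the rest is Gaussian bookkeeping: one must check that the interplay of this gain, the mild decay of $f$, and the factor $e^{-c|X|^2/t}$ always reconstitutes a power of $\max(\sqrt t,|X|)\approx\|(X,t)\|$ of the right order, uniformly over the relative scales of $|Y|$, $\sqrt t$ and $|X|$ — which is why the $Y$-integral is split into the regimes above. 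None of those cases is individually difficult, and the bounded range of $\|(X,t)\|$ is covered for free by $|u|\lesssim1$.
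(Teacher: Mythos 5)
Your proof is correct and rests on the same three ingredients as the paper's: the cancellation $\int f=0$, the interior H\"older continuity of $E(X,t,\cdot,\cdot)$ as an adjoint solution, and Aronson's Gaussian bounds together with conservation of mass. The place where you genuinely diverge is in the choice of decomposition threshold. The paper splits the $Y$-integral at $|Y|=\|(X,t)\|/10$ and writes $u=u_1+u_2+u_3$, with the H\"older gain applied on the inner region $|Y|<\|(X,t)\|/10$ at the (large) scale $\|(X,t)\|$; you instead write $u=\int[E(X,t,Y,0)-E(X,t,0,0)]f\,dY$ directly and split at $|Y|=\sqrt t/4$ (with a secondary split at $|Y|=|X|/2$). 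Your choice of $\sqrt t/4$ is the natural one, since $\sqrt t$ is precisely the parabolic scale at which the De Giorgi--Nash--Moser H\"older estimate for $E(X,t,\cdot,\cdot)$ around $(0,0)$ is available: the forward (in $s$) parabolic cylinder of radius $R$ over $(0,0)$ must fit inside $\{s<t\}$, so one needs $R\lesssim\sqrt t$. The paper's larger threshold $\|(X,t)\|/10$ means that in the regime $|X|\gg\sqrt t$ the oscillation bound $(|Y|/\|(X,t)\|)^\eta\sup E$ is not the raw interior H\"older estimate on a cylinder of size $\|(X,t)\|$ (no such cylinder fits in $\{s<t\}$); it is recovered because the Gaussian factor $e^{-c|X|^2/t}$ crushes the pointwise size of $E$ when $\sqrt t\ll|X|$, so the exponent on $|Y|$ is essentially free there. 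Your argument makes this case distinction explicit (it becomes the outer region $|Y|>\sqrt t/4$, handled by the triangle inequality plus Aronson), which is slightly longer but leaves nothing implicit; the paper's version is shorter but leans on the reader to supply that $\sup_{(Y,s)}E$ over the large set is controlled by $\|(X,t)\|^{-n}$. Both routes are sound; you also (harmlessly) assume $2\eta<\alpha_0$, which the paper matches by taking $\eta$ below the H\"older exponent of $E$, and you in fact obtain the stronger exponent $-n-2\eta$.
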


\begin{proof}[Proof of Lemma~\ref{lem.decay}] By the maximum principle clearly $|u|\le 1$ from the initial bound. Hence showing the bound 
$|u(X,t)|\le \|(X,t)\|^{-n-\eta}$ for $t>0$ will suffice. Again using the kernel $E$ we can write
\begin{multline*}
u(X,t)=\int_{\R^n}E(X,t,Y,0)f(Y)dY
=\int_{\{Y:|Y|<\|(X,t)\|/10\}}[E(X,t,Y,0)-E(X,t,0,0)]f(Y) dY\\
+E(X,t,0,0)\int_{\{Y:|Y|<\|(X,t)\|/10\}}f(Y)dY+\int_{\{Y:|Y|\ge\|(X,t)\|/10\}}E(X,t,Y,0)f(Y)dY\\
:=u_1(X,t)+u_2(X,t)+u_3(X,t).
\end{multline*}
We start with $u_3$. 
Since $\int_{\R^n} E(X,t,Y,s)dY=1$ and we are integrating only over the region where 
$|f|\le \|(X,t)\|^{-n-\eta}$ the bound is immediate. For $u_2$: because the average of $f$ is zero we have
\begin{multline*}
\left|\int_{\{Y:|Y|<\|(X,t)\|/10\}}f(Y)dY\right|=\left|\int_{\{Y:|Y|\ge\|(X,t)\|/10\}}f(Y)dY\right|\\\le
\int_{\{Y:|Y|\ge\|(X,t)\|/10\}}(1+|Y|)^{-n-2\eta}dY\le C\|(X,t)\|^{-\eta},
\end{multline*}
and from this the bound for $u_2$ follows from the upper bound for $E$ (c.f. \eqref{KE}).
Finally for $u_1$: using H\"older continuity of $E$ in the adjoint variables and the upper bound for it from \eqref{KE} we have
\begin{multline*}
|u_1(X,t)|\le \sup_{\{(Y,s):\|(Y,s)\|<2\|(X,t)\|\}}E(X,t,Y,s)\int_{\R^n}\left(\frac{|Y|}{\|(X,t)\|}\right)^\eta
(1+|Y|)^{-n-2\eta}dY\\
\le C \|(X,t)\|^{-n-\eta},
\end{multline*}
where $\eta$ is taken to be at most the H\"older exponent of modulus of continuity of $E$.
\end{proof}

We are now ready to start the actual proof of estimate \eqref{AE}.

\begin{proof}[Proof of \eqref{AE}]
We start with estimating $\tilde N(\nabla u)$ in a neighbourhood of $Q_1(0,0)\cap \partial\Omega$.
By H\"older's inequality  and $L^p$ Neumann solvability we have that
\begin{multline}\label{eq5.5}
\|\tilde N(\nabla u) \|_{L^1(Q_{16}(0,0)\cap \partial\Omega)}\le |Q_{16}(0,0)\cap\partial\Omega|^{1/p'}\left(\int_{Q_{16}(0,0)\cap\partial\Omega} \tilde N(\nabla u)^p d\sigma\right)^{1/p}\\
\lesssim  |Q_{16}(0,0)\cap\partial\Omega|^{1/p'} \left(\int_{\partial\Omega}|g|^p\right)^{1/p}\le  
|Q_{16}(0,0)\cap\partial\Omega|^{1/p'} |Q_{16}(0,0)\cap\partial\Omega|^{1/p}\\=|Q_{16}(0,0)\cap\partial\Omega|\le C, 
\end{multline}
since the volume of a surface cube of size $16$ on a Lipschitz graph is bounded by a uniform constant that only depends on the Lipschitz constant. 

Consider now a sequence of annuli $S_k$ partitioning the complement of the set $Q_{16}(0,0)\cap\partial\Omega$ in $\partial\Omega$. That is let
$$S_k=\{(X,t)\in \partial\Omega: 2^k<\|(X,t)\|\le 2^{k+1}\},\qquad\mbox{for } k=4,5,\dots.$$
Clearly, as the volume of $S_k$ is proportional to $2^{k(n+1)}$ there exists $N$, independent of $k$ and only depending on $n$, the Lipschitz norm of $\varphi$ and $c>0$, such that:
\begin{itemize}
\item For $r_k=c2^k$ there exist a cover of $S_k$ by $N$ boundary cubes $Q_{r_k}^{k,j}\cap\partial\Omega$
of the same size $r_k$, $j=1,2,\dots, N.$
\item $d(Q_{8r_k}^{k,j}\cap\partial\Omega,Q_1(0,0)\cap\partial\Omega)\ge c2^k$.
\end{itemize}

At this point we depart in our argument significantly from \cite{B}. Thanks to Theorem \ref{thm.NtoLoc} we have 
for each $Q_{r_k}^{k,j}$ and a truncated version $\tilde N^{r_k}$ (defined using truncated cones $\Gamma^{r_k}$) of the nontangential maximal function
\begin{multline}\label{eq5.6}
\|\tilde N^{r_k}(\nabla u) \|_{L^p(Q_{r_k}^{k,j}\cap \partial\Omega)}\le 
\|\wt N(|\nabla u|\1_{2Q_{r_k}^{k,j}})\|_{L^p(\partial \Omega)} \\\leq C r_k^{(n+1)/p}\left ( \fiint_{4Q_{r_k}^{k,j}\cap \om} |\nabla u|^2 dXdt \right)^\frac12\le C r_k^{-1+(n+1)/p}\left ( \fiint_{8Q_{r_k}^{k,j}\cap \om} |u|^2 dXdt \right)^\frac12.
\end{multline}
Here we have used the fact that $\partial^A_\nu u=0$ on $8Q_{r_k}^{k,j}\cap \om$ which allows us to use boundary Caccioppoli (Lemma~\ref{L:Caccio}) in the last estimate. 
We may now use the estimate \eqref{decay} to bound \eqref{eq5.6} further. It follows that 
\begin{equation}\label{eq5.7}
\|\tilde N^{r_k}(\nabla u) \|_{L^p(Q_{r_k}^{k,j}\cap \partial\Omega)}\le 
C r_k^{-1+(n+1)/p}r_k^{-n-\eta}=Cr_k^{-1-n+(n+1)/p-\eta}=Cr_k^{-(n-1)/p'-\eta}.
\end{equation}
To obtain an $L^1$ bound on each $S_k$ we use H\"older's inequality and sum over $j=1,2,\dots,N$:
\begin{multline}\label{eq5.8}
\|\tilde N^{r_k}(\nabla u) \|_{L^1(S_k)}\le \sum_{j=1}^N \|\tilde N^{r_k}(\nabla u) \|_{L^1(Q_{r_k}^{k,j}\cap \partial\Omega)}\\\le CN |Q_{r_k}^{k,j}\cap \partial\Omega|^{1/p'}\|\tilde N^{r_k}(\nabla u) \|_{L^p(Q_{r_k}^{k,j}\cap \partial\Omega)}\lesssim  r_k^{(n+1)/p'}
r_k^{-(n-1)/p'-\eta}\sim r_k^{-\eta}.
\end{multline}
It remains to estimate the part of $\|\tilde N(\nabla u) \|_{L^1(S_k)}$ away from the boundary. To that end let us consider any point $(Y,s)\in\Gamma(X,t)$ for some $(X,t)\in S_k$ such that $\delta(Y,s):=d((Y,s),\partial\Omega)\ge r_k$.
Using the interior Caccioppoli's inequality we have that
$$\left(\fiint_{Q_{\delta(Y,s)/2}(Y,s)}|\nabla u|^2\right)^{1/2}\lesssim r_{k}^{-1}\sup_{Q_{3\delta(Y,s)/4}(Y,s)} |u|
\le Cr_{k}^{-1} r_k^{-n-\eta},
$$
since points in $Q_{3\delta(Y,s)/4}(Y,s)$ have distance to $Q_1(0,0)\cap\partial\Omega$ at least $cr_k$. It follows that 
the contribution of this term to $\tilde N(\nabla u)(X,t)$ is at most $C r_k^{-1-n-\eta}$ and as the size of 
$S_k$ is $\sim r_k^{n+1}$ the $L^1$ norm of its contribution is of the same order as the term \eqref{eq5.8}, i.e., $Cr_k^{-\eta}$.

This estimate, together with \eqref{eq5.5} and \eqref{eq5.8}, yield:
\begin{multline}\label{eq5.9}
\|\tilde N(\nabla u) \|_{L^1(\partial\Omega)}\\\le \|\tilde N(\nabla u) \|_{L^1(Q_{16}(0,0)\cap \partial\Omega)}
+\sum_{k\ge 4}\|\tilde N(\nabla u) \|_{L^1(S_k)}\le C+C\sum_{k\ge 4}2^{-k\eta}\le \tilde{C}<\infty.
\end{multline}
Hence claim \eqref{AE} follows. 
\end{proof}

\medskip

\bibliographystyle{alpha}

\end{document}